\documentclass[sn-mathphys,Numbered]{sn-jnl}


\usepackage{graphicx}%
\usepackage{multirow}%
\usepackage{amsmath}%
\usepackage{amssymb}
\usepackage{mathtools}
\usepackage{amsthm}%
\usepackage{mathrsfs}%
\usepackage[title]{appendix}%
\usepackage{xcolor}%
\usepackage{textcomp}%
\usepackage{manyfoot}%
\usepackage{booktabs}%
\usepackage{algorithm}%
\usepackage{algorithmicx}%
\usepackage{algpseudocode}%
\usepackage{listings}%



\theoremstyle{thmstyleone}%
\newtheorem{theorem}{Theorem}
%

\theoremstyle{thmstyletwo}%
\newtheorem{example}{Example}%
\newtheorem{remark}{Remark}%
\newtheorem{lemma}{Lemma}%
\theoremstyle{thmstylethree}%

\raggedbottom

\begin{document}

\title[Article Title]{A Modified Maximum Principle for Control Systems Driven by Mixed
Fractional Brownian Motion}

\author[1]{\fnm{Yuecai} \sur{Han}}\email{hanyc@jlu.edu.cn}

\author*[1]{\fnm{Yuhang} \sur{Li}}\email{yuhangl22@mails.jlu.edu.cn}

\equalcont{These authors contributed equally to this work.}

\affil[1]{\orgdiv{School of Mathematics}, \orgname{Jilin University}, \orgaddress{ \city{Changchun}, \postcode{130012},  \state{Jilin Province}, \country{China}}}


\abstract{In this paper,  we investigate the optimal control problem for systems driven by mixed fractional Brownian motion (including a fractional Brownian motion with Hurst parameter $H>1/2$ and the  standard Brownian motion). By using Malliavin calculus and introducing a disturbance control region, we obtain a modified maximum principle. Through martingale representation theorem, we obtain the adjoint backward stochastic differential equation in a natural way.
 Furthermore,  corresponding to \cite{han2013maximum}, a significant result is that the necessary condition is simplified by only containing one equality. As an application, the linear quadratic case is investigated to illustrate the main results.}

\keywords{maximum principle, mixed fractional Brownian motion, backward stochastic differential equation, linear quadratic control system}


\pacs[MSC Classification]{60G22, 60H07, 93E20}

\maketitle

\section{Introduction}
Let $H\in(1/2,\,1)$ be a fixed constant. The $m$-dimensional fractional Brownian motion $B_t^H=\left(B_1^H(t),\cdots,B_m^H(t)\right), t\in[0,T]$  of Hurst parameter $H$ is a continuous, mean 0 Gaussian process with the covariance
$$ \mathbb{E}[B_i^H(t)B_j^H(s)]=\frac{1}{2}\delta_{ij}(t^{2H}+s^{2H}-| t-s|^{2H}), $$
where
\begin{equation}
\delta_{ij}=\left\{
\begin{aligned}
&1,\ \ {\rm if}\ i=j,
\\
&0,\ \ {\rm if}\ i\neq j, 
\end{aligned}
\right.
\notag
\end{equation}
$i,\,j=1,\dots,m$. 

The stochastic differential equations (SDEs in short) driven by fractional Brownian motion refer to \cite{lyons2002system,coutin2002stochastic,rascanu2002differential,hu2007differential,hu2009rough,friz2010multidimensional,hu2013multiple} and references therein. The classical stochastic optimal control problem for system driven by standard Brownian motion has been popularly investigated, some initial works refer to \cite{kushner1972necessary,bismut1978introductory,peng1990general,yong1999stochastic}. For the optimal control of linear systems involving
fractional Brownian motion, Hu and Zhou \cite{hu2005stochastic}  obtain the explicit Markov control by using the technique of completing squares and  Riccati equations. Hu and Peng \cite{hu2009backward} investigate backward stochastic differential equations (BSDEs in short) driven by fractional Brownian motions. Duncan and Pasik-Duncan \cite{duncan2013linear} get an explicit feedback optimal control for a linear system driven by fractional Gaussian process and a cost functional that is quadratic on the
system state and the control.  Han et al  \cite{han2013maximum} obtain the maximum principle for general control system driven by fractional Brownian motion. The adjoint equation they got is a type of linear BSDEs driven by both fractional Brownian motion and  standard Brownian motion.  Sun \cite{sun2021stochastic} obtains a stochastic maximum principle for general control systems driven by fractional Brownian motions, in which the admissible control domain could be non-convex.

Let a fractional Brownian motion $B_t^H=\left(B_1^H(t),\cdots,B_m^H(t)\right), t\in[0,T]$ and a standard Brownian motion $B_t=\left(B_1(t),\cdots,B_m(t)\right), t\in[0,T]$ satisfy
\begin{align}
B_j^H(t)=\int_0^t Z_H(t,\,s) dB_j(s), \quad 1\le j\le m,
\end{align}
where 
\begin{equation*}
Z_H(t,\,s)=\kappa_H\left[\left(\frac{t}{s}\right)^{H-\frac{1}{2}}(t-s)^{H-\frac{1}{2}}-\left(H-\frac{1}{2}\right)s^{\frac{1}{2}-H}\int_s^t u^{H-\frac{3}{2}}(u-s)^{H-\frac{1}{2}}du\right]
\end{equation*}
with
\begin{equation*}
\kappa_H=\sqrt{\frac{2H\varGamma(\frac{3}{2}-H)}{\varGamma(H+\frac{1}{2})\varGamma(2-2H)}}.
\end{equation*}
Then we call $B_t$ is the underlying Brownian motion of $B_t^H$. The considered mixed fractional Brownian motion consists of a fractional Brownian motion and its underlying Brownian motion.

In this paper, we study the optimal control problem for a general control system driven by mixed fractional Brownian motion with the following state equation:
\begin{equation*}
    \left\{\begin{array}{l}
 d X(t)=b\left(t,X(t),u(t)\right)dt+\sigma\left(t,X(t),u(t)\right)dB(t)\\
\qquad\qquad+\gamma\left(t,X(t),u(t)\right)\circ dB^H(t),\quad t\in[0,T],\\
 X(0)=x_0.
\end{array}\right.
\end{equation*}
We introduce a modified admissible control domain and show it is density in classical control domain. So that, a necessary condition, which contains only one equality, for optimal control process is obtained.

A difficulty we encounter is that the method of fractional calculus 
used in \cite{han2013maximum} is hard to get a convergence result of the variation equation. To deal with it, we derive the  convergence result by means of constructing a stopping time. Compared with classical duality approach, we do not need to know the form of the adjoint equation in advance in our derivations.  We  give out the explicit solution to the variation equation by  introducing two SDEs with fractional It$\hat{\rm o}$'s formula. Then, the necessary condition the optimal control should satisfy is obtained by introducing a pair of adapted processes. We prove that this pair of processes satisfy a linear BSDE by martingale representation theorem and  Malliavin calculus. 
Indeed, we give a representation of the unique solution  for a class of linear BSDEs driven by both fractional Brownian motion and standard Brownian motion. 
By increasing the dimension of noises, we also obtain the maximum principle for  control systems driven by fractional
Brownian motion and an independent Brownian motion.
Finally, as an application, we obtain  the unique optimal control process for linear
quadratic (LQ in short) control systems driven by mixed fractional Brownian motion.

The rest of this paper is organized as follows. In section 2, we introduce some definitions and propositions about fractional Brownian motion and Malliavin calculus. In section 3, we obtain the modified stochastic maximum principle  and get the adjoint equation through Malliavin calculus. In section 4, the linear quadratic case is investigated to illustrate the main results.

\section{Preliminaries}
 Let $(\varOmega,\,\mathcal F,\,P)$ be a complete stochastic space with a filtration $\{\mathcal F_t\}_{t\geq 0}$. $\mathcal F_t$ is generated by an $m$-dimensional fractional Brownian motion  $B_t^H=(B_1^H(t),\cdots,B_m^H(t),\ t\in[0,\,T])$ with the Hurst parameter $H\in(1/2,\,1)$.

Let the operators  $\mathbf{\Gamma}_{H,T}^*$ and $\mathbb{B}_{H,T}^*$ (\cite{hu2005integral}) be defined by 
\begin{align}
\mathbf{\Gamma}_{H,T}^*f(t)=\left(H-\frac{1}{2}\right)\kappa_H t^{\frac{1}{2}-H}\int_t^T u^{H-\frac{1}{2}}(u-t)^{H-\frac{3}{2}}f(u)du, \quad 0\leq t\leq T
\label{2.3}
\end{align}
and
\begin{align*}
\mathbb{B}_{H,T}^*f(t)=-\frac{2H\kappa_1}{\kappa_H}t^{\frac{1}{2}-H}\frac{d}{dt}\int_t^T (u-t)^{\frac{1}{2}-H}u^{H-\frac{1}{2}}f(u)du,
\end{align*}
where $f$ is a deterministic function and
\begin{align*}
\kappa_1=\frac{1}{2H\Gamma(H-\frac{1}{2})\Gamma(\frac{3}{2}-H)}.
\end{align*}
Furthermore, we have (Definition 6.1 in \cite{hu2005integral})
\begin{align}
\int_0^T f(t)dB_j^H(t):=\int_0^T (\mathbf{\Gamma}_{H,T}^*f)(t)dB_j(t)
\label{2.4}
\end{align}
and
\begin{align*}
\int_0^T f(t)dB_j(t):=\int_0^T (\mathbb{B}_{H,T}^*f)(t)dB^H_j(t).
\end{align*}

It is obviously that the filtration generated by the fractional Brownian motion $\{B_t^H\}_{t\in[0,\,T]}$ coincides with the one generated by the underlying Brownian motion $\{B_t\}_{t\in[0,\,T]}$, because
\begin{align*}
B_j^H(s)=\int_0^s\left(\mathbf{\Gamma^*_{H,s}}\mathbf{1}\right)(r)dB_j(r)
\end{align*}
is $\mathcal{F}_t$-measurable for $\forall\, 0\le s \le t\le T$. In the same way, by using operator $\mathbb{B}^*_{H,T}$, we show that
the filtration generated by $\{B_t\}_{t\in[0,\,T]}$ coincides with the one generated by $\{B_t^H\}_{t\in[0,\,T]}$. So
\begin{align*}
\mathcal{F}_t=\sigma(B^H_s,\ 0\leq s\leq t)=\sigma(B_s,\ 0\leq s\leq t).
\end{align*}

Next, we introduce the Malliavin derivatives \cite{hu2005integral}. Let $\xi_1,\cdots,\xi_k,\cdots$ be an orthonormal basis of $L^2([0,\,T])$ such that $\xi_k$, $k=1,2,\cdots$ are smooth functions on $[0,\,T]$. 
We denote $\tilde{\xi}_{j,l}=\int_0^T\xi_j(t)dB_l(t)$, where $j=1,2,...,$ and $l=1,...,m$. Let $\mathcal{P}$ be the set of all polynomials of the standard Brownian motions $B$ over interval $[0,T]$. Namely, $P$ contains all elements of the form
\begin{align*}
F(\omega)=f(\tilde{\xi}_{j_1,l_1},...,\tilde{\xi}_{j_n,l_n}),
\end{align*}
where $f$ is a polynomial of $n$ variables. The Malliavin derivative $D_s^lF$ is defined by
\begin{equation*}
D_s^lF=\sum_{i=1}^{n}\frac{\partial f}{\partial x_i}\left(\tilde{\xi}_{j_1,l_1},...,\tilde{\xi}_{j_n,l_n}\right)\xi_{j_i}(s)I_{\{l_i=l\}},\quad \forall 0\le s\le T.
\end{equation*}

Define $\eta_j=\mathbb{B}_{H,T}\xi_j$ and denote by $\mathcal{P}^H$ the set of all polynomial functional $\tilde{\eta}_{j,l}=\int_0^T\eta_j(t)dB_l^H(t)$. The elements G in $\mathcal{P}^H$ have the following form
\begin{align*}
G(\omega)=g(\tilde{\eta}_{j_1,l_1},...,\tilde{\eta}_{j_n,l_n}),
\end{align*}
where $g$ is a polynomial of $n$ variables. We define its Malliavin derivative $D_s^{H,l}G$ by
\begin{equation*}
D_s^{H,l}G=\sum_{i=1}^{n}\frac{\partial g}{\partial x_i}\left(\tilde{\eta}_{j_1,l_1},...,\tilde{\eta}_{j_n,l_n}\right)\eta_{j_i}(s)I_{\{j_i=l\}},\quad \forall 0\le s\le T.
\end{equation*}

Another useful Malliavin derivative $\mathbb{D}_t^{H,j}G$ is defined as follows 
\begin{equation}
\mathbb{D}_t^{H,j} G=\int_0^T \phi(s,\,t)D_s^{H,j} Gds, \ \ 0\leq t\leq T,
\label{2.5}
\end{equation}
where 
\begin{align}
\phi(s,\,t)=H(2H-1)\lvert s-t\lvert^{2H-2}.
\label{2.6}
\end{align}

We introduce the following normed spaces that will be used in the sequel.
Let $1/2<H<1$, $1-H<\alpha<1/2$ and $d\in \mathbb{N}^*$. $W_0^{\alpha,\infty}(0,\,T;\,\mathbb{R}^d)$ denotes the space of measurable functions $f:[0,\,T]\rightarrow \mathbb{R}^d$ such that 
\begin{equation}
\|f\|_{\alpha,\infty}:=\mathop{\sup}\limits_{0\leq t\leq T}\|f\|_{\alpha,t}<\infty,
\notag
\end{equation}
where
\begin{equation}\label{2.7}
\|f\|_{\alpha,t}:=\lvert f(t)\lvert+\int_0^t \frac{\lvert f(t)-f(s)\lvert}{(t-s)^{\alpha+1}}ds.
\end{equation}

Let $W_T^{1-\alpha,\infty}$ denote the space of measurable functions $f:[0,\,T]\rightarrow \mathbb{R}^d$ such that  
\begin{align*}
\|f\|_{1-\alpha,\infty,T}<\infty,
\end{align*}
where 
\begin{align}\label{norm}
\|f\|_{1-\alpha, \infty, t}:=\sup _{0 \leq u<v\le t}\left(\frac{|f(v)-f(u)|}{(v-u)^{1-\alpha}}+\int_u^v \frac{|f(y)-f(u)|}{(y-u)^{2-\alpha}} \mathrm{d} y\right) .
\end{align}

\section{The Maximum Principle}
In this section, we consider the following control system driven by mixed fractional Brownian motion
\begin{equation}
    \left\{\begin{array}{l}
 d X(t)=b\left(t,X(t),u(t)\right)dt+\sigma\left(t,X(t),u(t)\right)dB(t)\\
\qquad\qquad+\gamma\left(t,X(t),u(t)\right)\circ dB^H(t),\quad t\in[0,T],\\
 X(0)=x_0,
\end{array}\right.
\end{equation}
where the integral w.r.t fractional Brownian motion is Stratonovich's type and the integral w.r.t Standard Brownian motion is it$\hat{\rm o}$'s type. 
The cost function is
\begin{align}
J(u(\cdot))=\mathbb{E}\left[\int_0^Tf\left(t,X(t),u(t)\right)dt+g\left(X(T)\right)\right].
\end{align}
Here $b$, $\sigma$, and $\gamma $ are measurable functions on $[0,T]\times \mathbb{R}^d\times \mathbb{R}^k$ with values in $\mathbb{R}^d$, $\mathbb{R}^{d\times m}$ and $ \mathbb{R}^{d\times m}$, respectively.
$f$ and $g $ are measurable differential functions on $[0,T]\times \mathbb{R}^d\times \mathbb{R}^k$ and $\mathbb{R}^d$, respectively, with values in $\mathbb{R}$.

We denote by $\mathbb{U}$ the set of admissible control process
\textbf{u}$=(u(t))_{0\le t\le T}$ taking values in a given convex set $\textbf{D}\subset \mathbb{R}^k$ satisfying $E\int_0^T |u(t)|^4 dt <+\infty$ and $\mathbb{E}[D_r^Hu(t)]^2+\mathbb{E}\left[\mathbb{D}_s^HD^H_ru(t)\right]^2<+\infty,\, \forall t,s,r\in[0,T]$.

Denote
\begin{align*}
& l_x\left(t, x, u\right)=\left(\frac{\partial f_i\left(t, x, u\right)}{\partial x_j}\right)_{1 \leq i, j \leq n}, \\
& l_u\left(t, x, u\right)=\left(\frac{\partial f_i\left(t, x, u\right)}{\partial u_j}\right)_{1 \leq i \leq n, 1 \leq j \leq k}
\end{align*}
for $l=b,\sigma,\gamma.$

Assume that the continuously differentiabl functions $b,\sigma,\gamma$ satisfy the following conditions.
\begin{enumerate}
	\item[(H1)]  There exists a constant $L>0$ such that
\begin{align*}
&|b_x(t,x,u)+|b_u(t,x,u)|+|\sigma_x(t,x,u)+|\sigma_u(t,x,u)|\\
&\quad+|\gamma_x(t,x,u)+|\gamma_u(t,x,u)|\le L,\\
&|b_x(t,x,u)-b_x(t,y,v)|+|b_u(t,x,u)-b_u(t,y,v)|\\
&\quad+|\sigma_x(t,x,u)-\sigma_x(t,y,v)|+|\sigma_u(t,x,u)-\sigma_u(t,y,v)|\\
&\quad+|\gamma_x(t,x,u)-\gamma_x(t,y,v)|+|\gamma_u(t,x,u)-\gamma_u(t,y,v)|\\
&\le L\left(|x-y|+|u-v|\right),\quad \forall t\in[0,T], \quad\forall x,y\in\mathbb{R}^d, \quad\forall u,v\in\mathbb{R}^k.
\end{align*}
	
\item[(H2)] $\gamma(t,x,u)$ is H\"{o}lder continuous in time uniformly in $(x,u)$. There exists some constants $1-H<\gamma<1$, and $L>0$, such that
\begin{align*}
&|\gamma(t,x,u)-\gamma(s,x,u)|+|\gamma_x(t,x,u)-\gamma_x(s,x,u)|\\
&\quad+|\gamma_u(t,x,u)-\gamma_u(s,x,u)|\le L|t-s|^\gamma,\quad
\forall t,s\in[0,T],\quad \forall x\in\mathbb{R}^d,\quad u\in\mathbb{R}^k.
\end{align*}
\end{enumerate}

To simplify the notation without losing of the generality, we  consider the case $d=k=1$. We assume that $u^{*}(\cdot)$ is the optimal control process, i.e.,
\begin{align*}
    J(u^*(\cdot))=\inf_{u(\cdot)\in \mathbb{U}}J(u(\cdot))
,\end{align*}
and $X^*(t)$ is the corresponding state process. We call $\left(X^*(t),u^*(t)\right)$ an optimal pair.

Define
$$\mathbb{V}=\left\{v(\cdot): v(\cdot)=u^1(\cdot)-u^2(\cdot), \quad u^1(\cdot),u^2(\cdot)\in\mathbb{U}\right\}$$
and
$$\Bar{\mathbb{V}}=\left\{v(\cdot): v(\cdot)\in\mathbb{V}\quad s.t. \quad\mathbb{D}_t^Hv(t)=0,\quad\forall t\in[0,T]\right\}.$$
\begin{remark}
It is easy to check $\mathbb{V}=\mathbb{U}$ iff $\mathbf{D}=\mathbb{R}^k$.
\end{remark}
Then we show that $\Bar{\mathbb{V}}$ is density in $\mathbb{V}$, which is a key result in this paper. Namely, we have the following lemma.
\begin{lemma}\label{convergence}
For any $v(\cdot)\in \mathbb{V}$, there exists at least one sequence $\{v_n(\cdot)\}_{n=1}^{+\infty}$ in $\bar{\mathbb{V}}$ such that $\lim_{n\to\infty}\mathbb{E}|v_n(t)-v(t)|^2=0,\,\forall\, t\in[0,T]$.
\end{lemma}
\begin{proof}
Fix a $t\in[0,T]$, by Clark type representation (Chapter 12 of \cite{hu2005integral}), there exists the unique adapted square integrable process $\xi(\cdot)$ such that
\begin{align*}
v(t)=\mathbb{E}v(t)+\int_0^t\xi(s)dB^H(s),
\end{align*}
where the integral is It$\hat{\rm o}$-Wick's
type. Let $v_n(\cdot)$ be defined as
\begin{align*}
v_n(t)=\mathbb{E}v(t)+\int_0^t\xi_n(s)dB^H(s),
\end{align*}
where
\begin{align*}
\xi_n(s)=\xi(s)\mathbf{1}_{[0,t-\frac{1}{n}]}(s)-\frac{\int_0^{t-\frac{1}{n}}(t-r)^{2H-2}\xi(r)dr}{\int_{t-\frac{1}{n}}^t(t-r)^{2H-2}dr}\mathbf{1}_{(t-\frac{1}{n},t]}(s).
\end{align*}
It is clear that $\xi_n(\cdot)$ is $\mathcal{F}_t$-adapted and we  have
\begin{align*}
\mathbb{D}^{H}_t[v_n(t)]=&H(2H-1)\int_0^T|t-s|^{2H-2}D_s^{H}v_n(t)ds\\
=&H(2H-1)\int_0^t|t-s|^{2H-2}\xi_n(s)ds\\
=&H(2H-1)\int_0^{t-\frac{1}{n}}|t-s|^{2H-2}\xi(s)ds\\
&-H(2H-1)\int_{t-\frac{1}{n}}^t|t-s|^{2H-2}\frac{\int_0^{t-\frac{1}{n}}(t-r)^{2H-2}\xi(r)dr}{\int_{t-\frac{1}{n}}^t(t-r)^{2H-2}dr}ds
\\
=&0.
\end{align*}
Then we show the convergence. Denote $\tilde{\xi}_n(s)=\xi_n(s)-\xi(s)$, notice that $\mathbb{E}|\tilde{\xi}_n(s)|^2=O(n^{4H-2})$ and $\mathbb{E}|\mathbb{D}^H_s\tilde{\xi}_n(r)|^2=O(n^{4H-2}),\, s,r\in[t-\frac{1}{n},t]$. Through Proposition 2.1 of \cite{yuecai2018solutions}, we have
\begin{align*}
\mathbb{E}|v_n(t)-v(t)|^2=&\mathbb{E}\left|\int_{t-\frac{1}{n}}^t\tilde{\xi}_n(s)dB^H(s)\right|^2\\
=&\mathbb{E}\int_{t-\frac{1}{n}}^t\int_{t-\frac{1}{n}}^t\phi(s,t)\tilde{\xi}_n(s)\tilde{\xi}_n(r)dsdr\\
&+\mathbb{E}\int_{t-\frac{1}{n}}^t\int_{t-\frac{1}{n}}^t\mathbb{D}^H_s\tilde{\xi}_n(r)\mathbb{D}^H_r\tilde{\xi}_n(s)dsdr\\
\le&\mathbb{E}|\tilde{\xi}_n|^2\int_{t-\frac{1}{n}}^t\int_{t-\frac{1}{n}}^tH(2H-1)|s-r|^{2H-2}dsdr\\
&+\mathbb{E}\int_{t-\frac{1}{n}}^t\int_{t-\frac{1}{n}}^t\left|\mathbb{D}^H_s\tilde{\xi}_n(r)\right|^2dsdr\\
=&O\left(n^{-(2-2H)}\right)+O\left(n^{-(4-4H)}\right),
\end{align*}
which tends to $0$ as $n\to +\infty$.
This complete the proof of Lemma \ref{convergence}.
\end{proof}

\begin{example}
Let $u(t)=|B^H(t)|^2$, then we have
\begin{align*}
u(t)=t^{2H}+2\int_0^tB^H(s)dB^H(s).
\end{align*}
So that, we choose
\begin{align*}
u_n(t)=t^{2H}+2\int_0^t\xi_n(s)dB^H(s),
\end{align*}
where
\begin{align*}
\xi_n(s)=B^H(s)\mathbf{1}_{[0,t-\frac{1}{n}]}(s)-\frac{\int_0^{t-\frac{1}{n}}(t-r)^{2H-2}B^H(r)dr}{\int_{t-\frac{1}{n}}^t(t-r)^{2H-2}dr}\mathbf{1}_{(t-\frac{1}{n},t]}(s).
\end{align*}
Then it is easy to check $\mathbb{D}^H_tu_n(t)=0$ and $\lim_{n\to\infty}\mathbb{E}|u_n(t)-u(t)|^2=0$.
\end{example}

For all $0<\varepsilon<1$ and any other admissible control $u(\cdot)$, let 
\begin{align*}
    u^\varepsilon(t)=(1-\varepsilon)u^*(t)+\varepsilon u(t)\triangleq
u^*(t)+\varepsilon v(t),
\end{align*}
and $X^\varepsilon(t)$ is the corresponding state process. 
Based on the above lemma, it is reasonable to assume $v(\cdot)\in\bar{\mathbb{V}}$ at first.

Denote
\begin{align*}
l^*(t)=l(t,X^*(t),u^*(t))
\end{align*}
and 
\begin{align*}
l^\varepsilon(t)=l(t,X^\varepsilon(t),u^\varepsilon(t))
\end{align*}
for $l=b,\sigma,\gamma,f,b_x,\sigma_x,\gamma_x,f_x,b_u,\sigma_u,\gamma_u,f_u$.
Define the variation equation
\begin{equation}\label{3.3}
\left\{\begin{array}{l}
dy(t)=\left[b^*_x(t)y(t)+b^*_u(t)v(t)\right]dt+\left[\sigma^*_x(t)y(t)+\sigma^*_u(t)v(t)\right]dB(t)
\\\qquad\qquad+\left[\gamma^*_x(t)y(t)+\gamma^*_u(t)v(t)\right]
\circ dB^H(t),\quad t\in[0,T],\\
 y(0)=0.
\end{array}\right.
\end{equation}
Then, we have the following convergence result.

\begin{lemma}
    \label{lem1}
Assume $1/2<H<1$ and $1-H<\alpha<1/2$. Let assumptions {\rm (H1)}, {\rm (H2)} hold and
\begin{align}
	\tilde{X}^\varepsilon(t)=\frac{X^{\varepsilon}(t)-X^*(t)}{\varepsilon}-y(t).
\end{align}
	Then
\begin{align*}
\mathop{\lim}\limits_{\varepsilon\rightarrow 0}\sup_{0\le t\le T}\mathbb{E}\|\tilde{X}^\varepsilon(\cdot)\|^{2}_{\alpha ,t}=0.
\end{align*}
	Here, $\|\cdot\|_{\alpha,t}$ is defined in (\ref{2.7}).
\end{lemma}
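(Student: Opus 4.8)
The plan is to derive the linear mixed SDE solved by $\tilde X^\varepsilon$, show that its inhomogeneous terms vanish in the relevant moment norm, and then close the estimate by a Gronwall argument localized along a stopping time that tames the pathwise part. Since $b,\sigma,\gamma\in C^1$, the mean value theorem gives, for $l=b,\sigma,\gamma$,
\[
l\big(t,X^\varepsilon(t),u^\varepsilon(t)\big)-l\big(t,X^*(t),u^*(t)\big)=l^\varepsilon_x(t)\big(X^\varepsilon(t)-X^*(t)\big)+\varepsilon\,l^\varepsilon_u(t)v(t),
\]
where $l^\varepsilon_x(t)=\int_0^1 l_x\big(t,X^*(t)+\lambda(X^\varepsilon(t)-X^*(t)),u^*(t)+\lambda\varepsilon v(t)\big)d\lambda$ and $l^\varepsilon_u$ is defined analogously. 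Dividing the equation for $X^\varepsilon-X^*$ by $\varepsilon$, using $\varepsilon^{-1}(X^\varepsilon-X^*)=\tilde X^\varepsilon+y$, and subtracting $(\ref{3.3})$, one finds that $\tilde X^\varepsilon$ solves
\[
d\tilde X^\varepsilon=\big[b^\varepsilon_x\tilde X^\varepsilon+\Delta^\varepsilon_b\big]dt+\big[\sigma^\varepsilon_x\tilde X^\varepsilon+\Delta^\varepsilon_\sigma\big]dB+\big[\gamma^\varepsilon_x\tilde X^\varepsilon+\Delta^\varepsilon_\gamma\big]\circ dB^H,\qquad\tilde X^\varepsilon(0)=0,
\]
with source terms $\Delta^\varepsilon_l(t)=\big(l^\varepsilon_x(t)-l^*_x(t)\big)y(t)+\big(l^\varepsilon_u(t)-l^*_u(t)\big)v(t)$, $l=b,\sigma,\gamma$.

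\textbf{A priori bounds and vanishing sources.} Using the standard a priori estimates for such equations -- Burkholder--Davis--Gundy for the $dB$-integral and the pathwise fractional-calculus (Young) bounds for the $\circ\,dB^H$-integral as in \cite{rascanu2002differential,hu2007differential}, together with (H1), (H2) -- one shows, for every $p\ge1$, that $\sup_{0<\varepsilon<1}\mathbb{E}\|X^\varepsilon\|^p_{\alpha,\infty}<\infty$, $\mathbb{E}\|X^*\|^p_{\alpha,\infty}+\mathbb{E}\|y\|^p_{\alpha,\infty}<\infty$, and the order-$\varepsilon$ bound $\mathbb{E}\|X^\varepsilon-X^*\|^p_{\alpha,\infty}\le C_p\varepsilon^p$. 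By (H1) the maps $l_x,l_u$ are Lipschitz, so $|l^\varepsilon_x(t)-l^*_x(t)|+|l^\varepsilon_u(t)-l^*_u(t)|\le L\big(|X^\varepsilon(t)-X^*(t)|+\varepsilon|v(t)|\big)$; combining this with the bounds above, H\"older's inequality and dominated convergence gives $\mathbb{E}\|\Delta^\varepsilon_b\|^{2N}_{\alpha,\infty}+\mathbb{E}\|\Delta^\varepsilon_\sigma\|^{2N}_{\alpha,\infty}+\mathbb{E}\|\Delta^\varepsilon_\gamma\|^{2N}_{\alpha,\infty}\to0$ as $\varepsilon\to0$, and also $\sup_{0<\varepsilon<1}\mathbb{E}\|\tilde X^\varepsilon\|^p_{\alpha,\infty}<\infty$ for every $p\ge1$.

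\textbf{Closing the estimate: the stopping-time argument.} The difficulty is that the pathwise estimate for the Young integral against $B^H$ carries a multiplicative random constant depending on the H\"older seminorm $\Lambda:=\sup_{0\le s<r\le T}|B^H(r)-B^H(s)|/(r-s)^{1-\alpha}$ (and on $\|X^*\|_{\alpha,\infty}$, $\|y\|_{\alpha,\infty}$ through the composition estimates for $\gamma(t,X^*,u^*)$, $\gamma^\varepsilon_x$, $\Delta^\varepsilon_\gamma$), so Gronwall's inequality cannot be applied directly after taking expectations. To get around this, introduce the stopping time
\[
\tau_R:=\inf\Big\{t\in[0,T]:\ \|X^*\|_{\alpha,t}+\|y\|_{\alpha,t}+\!\!\sup_{0\le s<r\le t}\!\!\tfrac{|B^H(r)-B^H(s)|}{(r-s)^{1-\alpha}}\ge R\Big\}\wedge T.
\]
On $[0,\tau_R]$ every pathwise constant is dominated by a deterministic function of $R$; combining the localized Young estimate with the BDG estimate for the $dB$-part (which is already of Gronwall type on expectations) and running a Gronwall argument in $t\mapsto\mathbb{E}\|\tilde X^\varepsilon\|^{2N}_{\alpha,t\wedge\tau_R}$ yields
\[
\sup_{0\le t\le T}\mathbb{E}\|\tilde X^\varepsilon\|^{2N}_{\alpha,t\wedge\tau_R}\le C(R,N,T)\Big(\mathbb{E}\|\Delta^\varepsilon_b\|^{2N}_{\alpha,\infty}+\mathbb{E}\|\Delta^\varepsilon_\sigma\|^{2N}_{\alpha,\infty}+\mathbb{E}\|\Delta^\varepsilon_\gamma\|^{2N}_{\alpha,\infty}\Big)\xrightarrow[\varepsilon\to0]{}0
\]
for each fixed $R$, by the previous step. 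Finally, for each $t$ split $\mathbb{E}\|\tilde X^\varepsilon\|^{2N}_{\alpha,t}=\mathbb{E}\big[\|\tilde X^\varepsilon\|^{2N}_{\alpha,t}\mathbf{1}_{\{t\le\tau_R\}}\big]+\mathbb{E}\big[\|\tilde X^\varepsilon\|^{2N}_{\alpha,t}\mathbf{1}_{\{t>\tau_R\}}\big]$; on $\{t\le\tau_R\}$ the first term is at most $\sup_{s}\mathbb{E}\|\tilde X^\varepsilon\|^{2N}_{\alpha,s\wedge\tau_R}$, and by Cauchy--Schwarz and the uniform moment bound the second is at most $\big(\mathbb{E}\|\tilde X^\varepsilon\|^{4N}_{\alpha,\infty}\big)^{1/2}\mathbb{P}(\tau_R<T)^{1/2}\le C\,\mathbb{P}(\tau_R<T)^{1/2}$, uniformly in $\varepsilon\in(0,1)$. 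Since $\mathbb{P}(\tau_R<T)\to0$ as $R\to\infty$ (by Chebyshev and the finiteness of the moments of $\|X^*\|_{\alpha,\infty}$, $\|y\|_{\alpha,\infty}$ and of $\Lambda$, the last by Fernique's theorem), letting $\varepsilon\to0$ and then $R\to\infty$ gives $\sup_{0\le t\le T}\mathbb{E}\|\tilde X^\varepsilon\|^{2N}_{\alpha,t}\to0$.

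\textbf{The main obstacle.} The principal obstacle is the last step: the Young (pathwise) nature of the $\gamma\circ dB^H$ term means its estimates are not linear in the expectation and involve the random H\"older norm of $B^H$; the stopping-time localization, together with careful bookkeeping of how the $\|\cdot\|_{\alpha,\cdot}$-norms of $X^*$ and $y$ enter the composition estimates through (H1)--(H2), is exactly what makes the Gronwall argument close. Establishing the order-$\varepsilon$ bound $\mathbb{E}\|X^\varepsilon-X^*\|^p_{\alpha,\infty}\le C_p\varepsilon^p$ requires the same localization and is the remaining technical ingredient.
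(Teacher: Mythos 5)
Your proposal follows essentially the same route as the paper: a first-order (mean-value) expansion yielding a linear equation for $\tilde X^\varepsilon$ with source terms that vanish in mean square, localization by a stopping time that controls the $(1-\alpha)$-H\"older-type norm of $B^H$ so the Young-integral estimates become deterministic in $R$, a Gronwall bound on the stopped process, and the double limit $\varepsilon\to 0$ followed by $R\to\infty$. Your explicit Cauchy--Schwarz split over $\{t\le\tau_R\}$ and $\{t>\tau_R\}$, using a uniform-in-$\varepsilon$ moment bound and $P(\tau_R<T)\to 0$, is simply a more detailed rendering of the paper's concluding ``let $\varepsilon\to 0$, then let $R\to\infty$'' step.
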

\begin{proof}
	According to the definition of $\tilde X^\varepsilon(t)$,we have
	\begin{equation}\label{133}
	\left\{
	\begin{aligned}
	&d\tilde X^\varepsilon(t)=[\frac{1}{\varepsilon}\big(b^{\varepsilon}(t)-b^*(t)\big)-b_x^*(t)y(t)-b_u^*(t)v(t)]dt
	\\
	&\qquad\quad +[\frac{1}{\varepsilon}(\sigma^{\varepsilon}(t)-\sigma^*(t))-\sigma_x^*(t)y(t)-\sigma_u^*(t)v(t)]dB(t)
	\\
	&\qquad\quad +[\frac{1}{\varepsilon}(\gamma^{\varepsilon}(t)-\gamma^*(t))-\gamma_x^*(t)y(t)-\gamma_u^*(t)v(t)]\circ dB^H(t), &t&\in[0,\,T],
	\\
	&\tilde{X}^\varepsilon(0)=0.
	\end{aligned}
	\right.
\end{equation}
	After a first order development, denoting $X^{\lambda}(t)=X^*(t)+\lambda(X^{\varepsilon}(t)-X^*(t))$ and $u^{\lambda,\varepsilon}(t)=u^*(t)+\lambda\varepsilon v(t)$ for notation convenience, we have
\begin{align*}
d\tilde{X}^\varepsilon(t)=&\int_0^1 b_x(t,\,X^{\lambda}(t),u^{\varepsilon}(t))\tilde{X}^\varepsilon(t)d\lambda dt
+\int_0^1 \big(b_x(t,X^{\lambda}(t),u^{\varepsilon}(t))-b_x^*(t)\big)y(t)d\lambda dt
\\
&+\int_0^1 \big(b_u(t,X^*(t),u^{\lambda,\varepsilon}(t))-b_u^*(t)\big)v(t)d\lambda dt
\\
&+\int_0^1 \sigma_x(t,\,X^{\lambda}(t),\,u^{\varepsilon}(t))\tilde{X}^\varepsilon(t)d\lambda dB(t)\\
&+\int_0^1 \big(\sigma_x(t,\,X^{\lambda}(t),\,u^{\varepsilon}(t))-\sigma_x^*(t)\big)y(t) d\lambda dB(t)
	\\
	&+\int_0^1 \big(\sigma_u(t,\,X^*(t),\,u^{\lambda,\varepsilon}(t))-\sigma_u^*(t)\big)v(t) d\lambda dB(t)
	\\
	&+\int_0^1 \gamma_x(t,\,X^{\lambda}(t),\,u^{\varepsilon}(t))\tilde{X}^\varepsilon(t)d\lambda \circ dB^H(t)\\
 &+\int_0^1 \big(\gamma_x(t,\,X^{\lambda}(t),\,u^{\varepsilon}(t))-\gamma_x^*(t)\big)y(t) d\lambda\circ dB^H(t)
	\\
	&+\int_0^1 \big(\gamma_u(t,\,X^*(t),\,u^{\lambda,\varepsilon}(t))-\gamma_u^*(t)\big)v(t) d\lambda \circ dB^H(t)
	\\
	=&F(t)dt+G(t)dB(t)+H(t)\circ dB^H(t).
	\end{align*}	
	The last two terms in $F(t), G(t), H(t)$ tend to 0 in $L^2(\varOmega \times [0,\,T])$ as $\varepsilon\rightarrow 0$. To see this, we rewrite the last two terms in $F(t)$ as
	\begin{align*}
    I(t)=&\int_0^1 \big(b_x(t,\,X^*(t)+\lambda(X^{\varepsilon}(t)-X^*(t)),\,u^{\varepsilon}(t))-b_x(t,\,X^*(t),\,u^{\varepsilon}(t))\big)y(t) d\lambda \\
    &+\int_0^1 \big(b_x(t,\,X^*(t),\,u^*(t)+\lambda\varepsilon v(t))-b_x(t,\,X^*(t),\,u^*(t))\big)y(t)d\lambda
    \\
    &+\int_0^1 \big(b_u(t,\,X^*(t),\,u^*(t)+\lambda\varepsilon v(t))-b_u(t,\,X^*(t),\,u^*(t))\big)v(t)d\lambda.
\end{align*}	
	 By using  the Lipschitz continuity and boundedness of the functions as well as Cauchy-Schwarz inequality, we have
	 \begin{align*}
	 \mathbb{E}\left[\int_0^T \lvert I(t)\lvert^2 dt\right]
	 \le&\ C \left\{\left(\int_0^T \int_0^1 \mathbb{E}\lvert\lambda\varepsilon(\tilde{X}^\varepsilon(t)+ y(t))\lvert^4 d\lambda dt\right)^{\frac{1}{2}}\left(\int_0^T \mathbb{E}\lvert y(t)\lvert^4 dt \right)^{\frac{1}{2}}\right.
	 \\
	 &\left.+\left(\int_0^T \int_0^1 \mathbb{E}\lvert\lambda\varepsilon v(t)\lvert^4 d\lambda dt\right)^{\frac{1}{2}}\left(\int_0^T \mathbb{E}\lvert y(t)\lvert^4dt\right)^{\frac{1}{2}}\right.
	 \\
	 &\left.+\left(\int_0^T \int_0^1 \mathbb{E}\lvert\lambda\varepsilon v(t)\lvert^4 d\lambda dt\right)^{\frac{1}{2}}\left(\int_0^T \mathbb{E}\lvert v(t)\lvert^4dt\right)^{\frac{1}{2}}\right\},
	 \end{align*}
	 which converges to 0 as $\varepsilon\rightarrow 0$. Similar technique can be used to the last two terms in $G(t), H(t)$.
	 	 
	 From the above result, we rewrite (\ref{133}) as 
	 \begin{align*}
	 \tilde{X}^\varepsilon(t)=&\int_0^t\int_0^1 b_x(s,\,X^{\lambda}(s),\,u^{\varepsilon}(s))\tilde{X}^\varepsilon(s) d\lambda ds
	 \\
	 &+\int_0^t\int_0^1 \sigma_x(s,\,X^{\lambda}(s),\,u^{\varepsilon}(s))\tilde{X}^\varepsilon(s)d\lambda dB(s)
	 \\
	  &+\int_0^t\int_0^1 \gamma_x(s,\,X^{\lambda}(s),\,u^{\varepsilon}(s))\tilde{X}^\varepsilon(s)d\lambda \circ dB^H(s)+C_{\varepsilon}(t),
	 \end{align*}
	 where $C_{\varepsilon}(t)\rightarrow 0$ as $\varepsilon\rightarrow 0$, a.s.. Inspired by \cite{da2018mixed}, We
define the following stopping time
\begin{align*}
T_R:=\inf \left\{t\ge 0:\|B^H\|_{1-\alpha,\infty,t}\ge R\right\}\wedge T,
\end{align*}
where $\|\cdot\|_{1-\alpha,\infty,t}$ is defined as (\ref{norm}).
Notice that the family $(T_R)_{R\in \mathbb{N}^*}$ satisfies the property
\begin{align*}
\mathbb{P}\left[\bigcup_{R\in \mathbb{N}^*}\{T_R=T\}\right]=1,
\end{align*}
because of $P\big[\bigcap\{T_R<T\}\big]\le P\left[\|B^H\|_{1-\alpha,\infty,T}=\infty\right]=0.$
Denote
\begin{align*}
\tilde{X}^\varepsilon_R(t):=\tilde{X}^\varepsilon(t\wedge T_R), \quad t\in [0,T].
\end{align*}
Under the assumption (H1), we derive 
\begin{align*}
\mathbb{E}\| \tilde{X}^\varepsilon_R(\cdot) \|^{2}_{\alpha,t}
\leq & C\mathbb{E}\Bigg[\bigg\|\int_0^{\cdot\wedge T_R} \tilde X^\varepsilon(s)ds\bigg\|^{2}_{\alpha ,t}
+\bigg\|\int_0^{\cdot\wedge T_R} \tilde X^\varepsilon(s)dB(s)\bigg\|^{2}_{\alpha ,t}\\
&\qquad+\bigg\|\int_0^{\cdot\wedge T_R} \tilde X^\varepsilon(s)\circ dB^H(s)\bigg\|^{2}_{\alpha ,t}+|\Tilde{C}_{\varepsilon}|\Bigg]\\
=&C(|\Tilde{C}_{\varepsilon}|+A_1+A_2+A_3),
\end{align*}
where $|\Tilde{C}_{\varepsilon}|=\sup_{t\in[0,T]}\mathbb{E}\|C_\varepsilon\|_{\alpha,t}^{2},$ which tends to $0$ as $\varepsilon\to 0$. $C$ denotes a generic positive constant depend on  parameters of the problem, which may vary from line to line.

For $A_1$, we have
\begin{align*}
\left\|\int_0^{\cdot\wedge T_R} \tilde X^\varepsilon(s)ds\right\|_{\alpha ,t}\le& \int_0^{t\wedge T_R} |\tilde X^\varepsilon(s)|ds+\int_0^t(t-s)^{-\alpha-1}\int_{s\wedge T_R}^{t\wedge T_R}|\tilde{X}^\varepsilon(u)|duds\\
\le&\int_0^t|\tilde X^\varepsilon_R(s)|ds+\int_0^t(t-s)^{-\alpha-1}\int_s^t|\tilde{X}^\varepsilon_R(u)|duds\\
\le&\int_0^t|\tilde X^\varepsilon_R(s)|ds+\frac{1}{\alpha}\int_0^t(t-s)^{-\alpha}|\tilde X^\varepsilon_R(s)|ds\\
\le &\left(T^\alpha+\frac{1}{\alpha}\right)\int_0^t(t-s)^{-\alpha}|\tilde X^\varepsilon_R(s)|ds.
\end{align*}
Using the $
\text {Hölder}
$'s inequality and the fact $\alpha<\frac{1}{2}$, we obtain
\begin{align*}
A_1\le C\left(\int_0^t\mathbb{E}|\tilde X^\varepsilon_R(s)|^{2}ds\right).
\end{align*}

For $A_2$, we derive
\begin{align*}
A_2\le& C\mathbb{E}\left(\int_0^{t\wedge T_R}|\tilde X^\varepsilon(s)|dB(s)\right)^{2}\\
&+C\mathbb{E}\left(\int_0^t(t-s)^{-\alpha-1}\left|\int_{s\wedge T_R}^{t\wedge T_R}\tilde{X}^\varepsilon(u)dB(u)\right|ds\right)^{2}\\
=&A_{21}+A_{22}.
\end{align*}
Using the It$\hat{\rm o}$ isometry and the $
\text {Hölder}
$'s inequality, we have
\begin{align*}
A_{21}\le C\mathbb{E}\left(\int_0^{t\wedge T_R}|\tilde X^\varepsilon_R(s)|^{2}ds\right)\le C\left(\int_0^t\mathbb{E}|\tilde X^\varepsilon_R(s)|^{2}ds\right)
\end{align*}
and
\begin{align*}
A_{22}&\le C\left(\int_0^t\frac{ds}{(t-s)^{\alpha+\frac{1}{2}}}\right)\int_0^t(t-s)^{-\alpha-\frac{1}{2}-N}\mathbb{E}\left[\left|\int_{s\wedge T_R}^{t\wedge T_R}\tilde X^\varepsilon(u)dB(u)\right|^{2}\right]ds\\
&\le C\int_0^t(t-s)^{-\alpha-\frac{3}{2}}\mathbb{E}\left[\int_{s\wedge T_R}^{t\wedge T_R}|\tilde X^\varepsilon(u)|^{2}du\right]ds\\
&\le C\int_0^t(t-s)^{-\alpha-\frac{3}{2}}\mathbb{E}\left[\int_{s}^{t}|\tilde X^\varepsilon_R(u)|^{2}du\right]ds\\
&\le C\int_0^t(t-s)^{-\alpha-\frac{1}{2}}\mathbb{E}|\tilde X^\varepsilon_R(s)|^{2}ds.
\end{align*}

From proposition 4.3 of \cite{rascanu2002differential} and by using the $
\text {Hölder}
$'s inequality, we have
\begin{align*}
A_3\le CR^{2}\int_0^t\left((t-s)^{-2\alpha}+s^{-\alpha}\right)\mathbb{E}\|\tilde X^\varepsilon_R(\cdot)\|_{\alpha,s}^{2}ds.
\end{align*}
Above all, we obtain
\begin{align*}
\mathbb{E}\|\tilde X^\varepsilon_R(\cdot)\|_{\alpha,t}^{2}\le C|\tilde{C}_{\varepsilon}|+C\left(1+R^{2}\right)\int_0^t\varphi(t,s)\mathbb{E}\|\tilde X^\varepsilon_R(\cdot)\|_{\alpha,s}^{2}ds,
\end{align*}
where
\begin{align*}
\varphi(t,s)=s^{-\alpha}+(t-s)^{-\alpha-\frac{1}{2}}.
\end{align*}

Notice that $\varphi(t,s)\le (\sqrt{T}+1)t^{\alpha+\frac{1}{2}}(t-s)^{-\alpha-\frac{1}{2}}s^{-\alpha-\frac{1}{2}}$ for $0\le s\le t\le T$, We take the maximum of both sides of the above equation, so that
\begin{align*}
&\sup _{s \in[0, t]} \mathbb{E}\left[\left\|\tilde{X}^\varepsilon_R(\cdot)\right\|_{\alpha, s}^{2 }\right]\\
&\le C\left[|\tilde{C}_{\varepsilon}|+\left(1+R^{2}\right)(\sqrt{T}+1)t^{\alpha+\frac{1}{2}}\int_0^t(t-s)^{-\alpha-\frac{1}{2}}s^{-\alpha-\frac{1}{2}}\sup_{u\in[0,s]}\mathbb{E}\|\tilde X^\varepsilon_R(\cdot)\|_{\alpha,u}^{2}ds\right].
\end{align*}
By the Lemma 7.6 of \cite{rascanu2002differential}, we deduce
\begin{align*}
\sup _{s \in[0, t]} \mathbb{E}\left[\left\|\tilde{X}^\varepsilon_R(\cdot)\right\|_{\alpha, s}^{2 }\right] \leq C\left|\tilde{C}_{\varepsilon}\right| d_\alpha \exp \left(c_\alpha t\left[C\left(1+R^{2 }\right)(\sqrt{T}+1)\right]^{\frac{2}{1-2 \alpha}}\right)
\end{align*}
for all $t\in[0,T]$, where $c_\alpha,d_\alpha$ are positive constants depending only on $\alpha$. 
Let
\begin{align*}
R=R(\varepsilon):=\left(\frac{\left[-\frac{1}{2}ln\left(|\tilde{C}_{\varepsilon}|\wedge1\right)\right]^{\frac{1-2\alpha}{2}}}{C(\sqrt{T}+1)(c_\alpha T)^{\frac{1-2\alpha}{2}}}-1\right)^{\frac{1}{2}},
\end{align*}
which tends to $+\infty$ as $\varepsilon\to 0$. So that
\begin{align*}
\mathbb{E}\left[\left\|\tilde{X}^\varepsilon_R(\cdot)\right\|_{\alpha, t}^{2 }\right] \leq C\left|\tilde{C}_{\varepsilon}\right|^{\frac{1}{2}},\quad \forall t\in[0,T].
\end{align*}
Let $\varepsilon\to 0$, we complete the proof of Lemma \ref{lem1}.

\end{proof}

To derive the maximum principle, we need to obtain an explicit solution to Eq. (\ref{3.3}). Consider the following linear SDE.
\begin{equation}\label{3.6}
\left\{\begin{array}{l}
d\Phi(t)=b_x^*(t)\Phi(t)dt+\sum_{j=1}^m\left[\sigma_x^{j,*}(t)\Phi(t)dB_j(t)+\gamma_x^{j,*}(t)\Phi(t)\circ dB_j^H(t)\right],\\
 \Phi(0)=1.
\end{array}\right.
\end{equation}
It is easy to verify that $\Psi(t)=\Phi^{-1}(t)$ is the unique solution of the following SDE:
\begin{equation}\label{3.7}
\left\{\begin{array}{l}
d\Psi(t)=\left(-b_x^*(t)+\sum_{j=1}^m\sigma_x^{j,*}(t)^2\right)\Psi(t)dt-\sum_{j=1}^m\sigma_x^{j,*}(t)\Psi(t)dB_j(t)\\
\qquad\qquad-\sum_{j=1}^m\gamma_x^{j,*}(t)\Psi(t)\circ dB_j^H(t),\\
 \Psi(0)=1.
\end{array}\right.
\end{equation}

\begin{remark}
To calculate $\Psi$, we need to show $\Phi(t)\neq 0,\,\forall t\in[0,T]$. Indeed, by it$\hat{o}$'s formula, we have
\begin{align*}
dln\Phi(t)=&\Phi(t)^{-1}d\Phi(t)-\frac{1}{2}\Phi(t)^{-2}\sum_{j=1}^m\left(\sigma_x^{j,*}(t)\Phi(t)\right)^2dt\\
=&\left(b_x^*(t)+\frac{1}{2}\sum_{j=1}^m\sigma_x^{j,*}(t)^2\right)dt+\sum_{j=1}^m\left[\sigma_x^{j,*}(t)dB_j(t)+\gamma_x^{j,*}(t)\circ dB_j^H(t)\right].
\end{align*}
So that,
\begin{align*}
\Phi(t)=&exp\Bigg\{\int_0^tb_x^*(s)+\frac{1}{2}\sum_{j=1}^m\sigma_x^{j,*}(s)^2ds\\
&\qquad+\sum_{j=1}^m\int_0^t\sigma_x^{j,*}(s)dB_j(s)+\sum_{j=1}^m\int_0^t\gamma_x^{j,*}(s)\circ dB_j^H(s)\Bigg\}\\
>&0.
\end{align*}
\end{remark}

From the It$\hat{\rm o}$'s formula and Proposition 2.7 of \cite{han2013maximum} we get the solution of Eq. (\ref{3.3}) by using $\Phi(t)$ and $\Psi(t)$.
~\\

\begin{lemma}
Let $\Phi(t)$ and $\Psi(t)$ be defined by (\ref{3.6}) and (\ref{3.7}). Then the solution to Eq. (\ref{3.3}) is given by
\begin{align}\label{3.9}
y(t)=&\Phi(t)\int_0^t\Psi(s)\left(b_u^*(s)-\sum_{j=1}^m\sigma_x^{j,*}(s)\sigma_u^{j,*}(s)\right)v(s)ds\notag\\
&+\sum_{j=1}^m\Phi(t)\int_0^t\Psi(s)\sigma_u^{j,*}(s)v(s)dB_j(s)\notag\\
&+\sum_{j=1}^m\Phi(t)\int_0^t\Psi(s)\gamma_u^{j,*}(s)v(s)\circ dB_j^H(s).
\end{align}
\end{lemma}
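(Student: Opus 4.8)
The plan is to use the classical variation-of-constants method, adapted to the mixed fractional setting, so that \eqref{3.9} is obtained rather than merely verified. First I would record the auxiliary fact, asserted just above the statement, that $\Psi(t)=\Phi^{-1}(t)$ solves \eqref{3.7}: this is checked by applying the It\^o formula for mixed fractional Brownian motion (Proposition 2.7 of \cite{han2013maximum}) to the product $\Phi(t)\Psi(t)$ and showing $d(\Phi(t)\Psi(t))=0$ together with $\Phi(0)\Psi(0)=1$. The drift term $\sum_{j=1}^m\sigma_x^{j,*}(t)^2\,\Psi(t)$ in \eqref{3.7} is precisely the It\^o correction that cancels the cross-variation of the $dB_j$-parts of $\Phi$ and $\Psi$, whereas the Stratonovich $\circ\,dB_j^H$ parts combine by the ordinary Leibniz rule and need no correction.

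Next I would set $y(t)=\Phi(t)Z(t)$ for an unknown process $Z$ with $Z(0)=0$ (forced by $y(0)=0$ and $\Phi(0)=1$), and write $dZ(t)=\mu(t)\,dt+\sum_{j=1}^m\nu_j(t)\,dB_j(t)+\sum_{j=1}^m\rho_j(t)\circ dB_j^H(t)$ with coefficients to be identified. Applying the product rule from \cite{han2013maximum} to $\Phi(t)Z(t)$ gives
\[
dy(t)=Z(t)\,d\Phi(t)+\Phi(t)\,dZ(t)+\sum_{j=1}^m\sigma_x^{j,*}(t)\Phi(t)\nu_j(t)\,dt,
\]
where the last term is the cross-bracket of the $dB_j$-parts of $\Phi$ and $Z$; the fractional parts contribute nothing to this bracket since the $\circ\,dB_j^H$ integrals are pathwise (Young) integrals for $H>1/2$. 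Substituting $d\Phi(t)$ from \eqref{3.6}, using $y=\Phi Z$ and $\Psi=\Phi^{-1}$, and matching the $dt$, the $dB_j$ and the $\circ\,dB_j^H$ coefficients against those of \eqref{3.3} (read with the $m$-dimensional noise) yields $\nu_j=\Psi\sigma_u^{j,*}v$ and $\rho_j=\Psi\gamma_u^{j,*}v$ from the martingale and fractional terms, and then $\mu=\Psi\big(b_u^*-\sum_{j=1}^m\sigma_x^{j,*}\sigma_u^{j,*}\big)v$ from the drift term. Integrating $dZ$ from $0$ and multiplying by $\Phi(t)$ reproduces exactly \eqref{3.9}.

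Finally I would observe that \eqref{3.3} is a linear SDE driven by $B$ and $B^H$ whose coefficients are bounded and sufficiently regular under (H1)--(H2), hence it has a unique solution (by the fixed-point estimate underlying Lemma~\ref{lem1}, or by standard well-posedness of mixed linear Young/It\^o equations); consequently the process built above is \emph{the} solution, not just \emph{a} solution. The only genuinely delicate point is the justification of the product rule in the mixed framework---in particular confirming that the cross-bracket of $\Phi$ and $Z$ involves only the Brownian components and equals $\sum_{j=1}^m\sigma_x^{j,*}\Phi\nu_j\,dt$, with the Stratonovich fractional integrals obeying the ordinary chain and product rules---after which the rest is coefficient bookkeeping.
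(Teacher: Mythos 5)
Your proposal is correct and follows essentially the route the paper intends: the paper gives no written proof but points to the It\^o formula and Proposition 2.7 of \cite{han2013maximum}, which is exactly the product rule you apply to $\Phi(t)Z(t)$ after verifying $\Psi=\Phi^{-1}$; your coefficient matching (with the It\^o correction only from the $dB_j$ cross-brackets and none from the $\circ\,dB_j^H$ parts) reproduces \eqref{3.9}. The added remarks on uniqueness and on why the fractional terms contribute no bracket are sound and merely make explicit what the paper leaves implicit.
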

Then we have the following theorem by using $\Phi, \Psi$ and $y$.
\begin{theorem}\label{mainin}
Let $u^*$ be the optimal admissible control. Then the following inequality holds
\begin{align}\label{3.10}
\mathbb{E}\int_0^T\Bigg[&\int_t^Tf_x^*(s)\Phi(s)ds\Psi(t)\left((b_u^*(t)-\sum_{j=1}^m\sigma_x^{j,*}(t)\sigma_u^{j,*}(t)\right)
+f_u^*(t)\notag\\
&+g_x(X^*(T))\Phi(T)\Psi(t)\left(b_u^*(t)-\sum_{j=1}^m\sigma_x^{j,*}(t)\sigma_u^{j,*}(t)\right)\notag\\
&+\sum_{j=1}^m\int_t^TD_t^j\left[f_x^*(s)\Phi(s)\right]ds\Psi(t)\sigma_u^{j,*}(t)\notag\\
&+\sum_{j=1}^mD_t^j\left[g_x(X^*(T))\Phi(T)\right]\Psi(t)\sigma_u^{j,*}(t)\Bigg]v(t)dt\notag\\
+\mathbb{E}\int_0^T&\sum_{j=1}^m\mathbb{D}_t^{H,j}\Bigg[\mathbb{E}^{\mathcal{F}_t}\left[\int_t^Tf_x^*(s)\Phi(s)ds\right]\Psi(t)\gamma_u^{j,*}(t)v(t)
\notag\\
&\qquad\quad\qquad+\mathbb{E}^{\mathcal{F}_t}\left[g_x(X^*(T))\Phi(T)\right]\Psi(t)\gamma_u^{j,*}(t)v(t)\Bigg] dt\ge 0
\end{align}
for $v(\cdot)=u(\cdot)-u^*(\cdot)$, where $u(\cdot)$ is any other admissible control process.
\end{theorem}
\begin{proof}
By Lemma \ref{lem1} and the dominated convergence theorem, we have 
\begin{align*}
\frac{d}{d\varepsilon}J\left(u^*(\cdot)+\varepsilon v(\cdot)\right)\Big|_{\varepsilon=0}=\mathbb{E}\int_0^T\left(f_x^*(t)y(t)+f_u^*(t)v(t)\right)dt+\mathbb{E}[g_x(X^*(T))y(T)].
\end{align*}
Substituting (\ref{3.9}) in the above equation we obtain
\begin{align} \label{3.12}
\frac{d}{d\varepsilon}J\Big(u^*(\cdot)&+\varepsilon v(\cdot)\Big)\Big|_{\varepsilon=0}\notag\\
=\mathbb{E}\Bigg[&\int_0^Tf_x^*(t)\Phi(t)\int_0^t\Psi(s)\left(b_u^*(s)-\sum_{j=1}^m\sigma_x^{j,*}(s)\sigma_u^{j,*}(s)\right)v(s)dsdt\notag\\
&+\sum_{j=1}^m\int_0^Tf_x^*(t)\Phi(t)\int_0^t\Psi(s)\sigma_u^{j,*}(s)v(s)dB_j(s)dt\notag\\
&+\sum_{j=1}^m\int_0^Tf_x^*(t)\Phi(t)\int_0^t\Psi(s)\gamma_u^{j,*}(s)v(s)\circ dB^H_j(s)dt+\int_0^Tf_u^*(t)v(t)dt\notag\\
&+g_x(X^*(T))\Phi(T)\int_0^T\Psi(t)\left(b_u^*(t)-\sum_{j=1}^m\sigma_x^{j,*}(t)\sigma_u^{j,*}(t)\right)v(t)dt\notag\\
&+\sum_{j=1}^mg_x(X^*(T))\Phi(T)\int_0^T\Psi(t)\sigma_u^{j,*}(t)v(t)dB_j(t)\notag\\
&+\sum_{j=1}^mg_x(X^*(T))\Phi(T)\int_0^T\Psi(t)\gamma_u^{j,*}(t)v(t)\circ dB_j^H(t)
\Bigg]\notag\\
=\mathbb{E}\Bigg[&\int_0^T\int_t^Tf_x^*(s)\Phi(s)ds\Psi(t)\left(b_u^*(t)-\sum_{j=1}^m\sigma_x^{j,*}(t)\sigma_u^{j,*}(t)\right)v(t)dt\notag\\
&+\sum_{j=1}^m\int_0^T\int_t^Tf_x^*(s)\Phi(s)ds\Psi(t)\sigma_u^{j,*}(t)v(t)dB_j(t)\notag\\
&+\sum_{j=1}^m\int_0^T\int_t^Tf_x^*(s)\Phi(s)ds\Psi(t)\gamma_u^{j,*}(t)v(t)\circ dB^H_j(t)+\int_0^Tf_u^*(t)v(t)dt\notag\\
&+g_x(X^*(T))\Phi(T)\int_0^T\Psi(t)\left(b_u^*(t)-\sum_{j=1}^m\sigma_x^{j,*}(t)\sigma_u^{j,*}(t)\right)v(t)dt\notag\\
&+\sum_{j=1}^mg_x(X^*(T))\Phi(T)\int_0^T\Psi(t)\sigma_u^{j,*}(t)v(t)dB_j(t)\notag\\
&+\sum_{j=1}^mg_x(X^*(T))\Phi(T)\int_0^T\Psi(t)\gamma_u^{j,*}(t)v(t)\circ dB_j^H(t)
\Bigg]\notag\\
=\mathbb{E}\Bigg[&\int_0^T G_1(T,t)v(t)dt+\sum_{j=1}^m\int_0^T G_{2,j}(T,t)v(t)dB_j(t)\notag\\
&\qquad+\sum_{j=1}^m\int_0^TG_{3,j}(T,t)v(t)\circ dB_j^H(t)\Bigg],
\end{align}
where
\begin{align*}
G_1(T,t)=&\int_t^Tf_x^*(s)\Phi(s)ds\Psi(t)\left(b_u^*(t)-\sum_{j=1}^m\sigma_x^{j,*}(t)\sigma_u^{j,*}(t)\right)
+f_u^*(t)\notag\\
&+g_x(X^*(T))\Phi(T)\Psi(t)\left(b_u^*(t)-\sum_{j=1}^m\sigma_x^{j,*}(t)\sigma_u^{j,*}(t)\right),
\end{align*}
\begin{align*}
G_{2,j}(T,t)=\int_t^Tf_x^*(s)\Phi(s)ds\Psi(t)\sigma_u^{j,*}(t)
+
g_x(X^*(T))\Phi(T)\Psi(t)\sigma_u^{j,*}(t)
\end{align*}
and
\begin{align*}
G_{3,j}(T,t)=\int_t^Tf_x^*(s)\Phi(s)ds\Psi(t)\gamma_u^{j,*}(t)
+
g_x(X^*(T))\Phi(T)\Psi(t)\gamma_u^{j,*}(t).
\end{align*}
Notice that $\int_t^Tf_x^*(s)\Phi(s)ds$ and $g_x(X^*(T))\Phi(T)$ are not $\mathcal{F}_t$-measurable, so the integral w.r.t $B^H(t)$ is hard to deal with in the following. To overcome this problem, we use martingale representation theorem and Fubini's theorem.

By martingale representation theorem, there is a unique $\mathcal{F}_t$-adapted process $M_s(\cdot)$, such that
\begin{align*}
f_x^*(s)\Phi(s)=\mathbb{E}^{\mathcal{F}_t}[f_x^*(s)\Phi(s)]+\int_t^sM_s(r)dB(r),\quad 0\le t\le s\le T.
\end{align*}
Then we have
\begin{align*}
&\mathbb{E}\int_0^T\int_t^Tf_x^*(s)\Phi(s)ds\Psi(t)\gamma_u^{j,*}(t)v(t)\circ dB^H_j(t)\\
&=\mathbb{E}\int_0^T\int_t^T\mathbb{E}^{\mathcal{F}_t}[f_x^*(s)\Phi(s)]ds\Psi(t)\gamma_u^{j,*}(t)v(t)\circ dB^H_j(t)\\
&\quad +\mathbb{E}\int_0^T\int_t^T\int_t^sM_s(r)dB(r)ds\Psi(t)\gamma_u^{j,*}(t)v(t)\circ dB^H_j(t)\\
&=\mathbb{E}\int_0^T\mathbb{E}^{\mathcal{F}_t}\left[\int_t^Tf_x^*(s)\Phi(s)ds\right]\Psi(t)\gamma_u^{j,*}(t)v(t)\circ dB^H_j(t)\\
&\quad +\mathbb{E}\int_0^T\int_r^T\int_0^rM_s(r)\Psi(t)\gamma_u^{j,*}(t)v(t)\circ dB^H_j(t)dsdB(r).
\end{align*}
The last term is $0$, because $\int_r^T\int_0^rM_s(r)\Psi(t)\gamma_u^{j,*}(t)v(t)\circ dB^H_j(t)ds$ is $\mathcal{F}_r$-measurable. Same method is used to $g_x(X^*(T))\Phi(T)$, we obtain
\begin{align*}
&\mathbb{E}\int_0^Tg_x(X^*(T))\Phi(T)\Psi(t)\gamma_u^{j,*}(t)v(t)\circ dB_j^H(t)\\
&=\mathbb{E}\int_0^T\mathbb{E}^{\mathcal{F}_t}\left[g_x(X^*(T))\Phi(T)\right]\Psi(t)\gamma_u^{j,*}(t)v(t)\circ dB_j^H(t),
\end{align*}
which means
\begin{align}\label{G3}
\mathbb{E}\int_0^TG_{3,j}(T,t)v(t)\circ dB_j^H(t)=\mathbb{E}\int_0^T\mathbb{E}^{\mathcal{F}_t}G_{3,j}(T,t)v(t)\circ dB_j^H(t)
\end{align}

~\\

Then we will deal with the last two terms of Eq.(\ref{3.12}). 
Firstly, we need the following identities from Malliavin calculus (Theorem 3.15 of \cite{nunno2008malliavin})
\begin{align*}
\mathbb{E}\left(F\int_0^Tg(t)dB_j(t)\right)=\mathbb{E}\int_0^T(D_t^jF)g(t)dt
,\end{align*}
and (Proposition 2.5 of \cite{han2013maximum})
\begin{equation*}
\mathbb{E}\left(F\int_a^b \psi_t  \circ dB^H_j(t)\right)=\mathbb{E}\int_a^b (\mathbb{D}_t^{H,j}[F \psi_t]) dt.
\end{equation*}
By these two identities, Fubini's theorem and Eq. (\ref{G3}), we have
\begin{align*}
\mathbb{E}
\int_0^T&G_{2,j}(T,t)v(t)dB_j(t)\notag\\
=&\mathbb{E}
\int_0^T\int_t^Tf_x^*(s)\Phi(s)ds\Psi(t)\sigma_u^{j,*}(t)v(t)dB_j(t)\notag\\
&+\mathbb{E}g_x(X^*(T))\Phi(T)\int_0^T\Psi(t)\sigma_u^{j,*}(t)v(t)dB_j(t)\notag\\
=&\mathbb{E}\int_0^T\int_0^tD_s^j\left[f_x^*(t)\Phi(t)\right]\Psi(s)\sigma_u^{j,*}(s)v(s)dsdt\notag\\
&+\mathbb{E}\int_0^TD_t^j\left[g_x(X^*(T))\Phi(T)\right]\Psi(t)\sigma_u^{j,*}(t)v(t)dt\notag\\
=&\mathbb{E}\int_0^T\int_t^TD_t^j\left[f_x^*(s)\Phi(s)\right]ds\Psi(t)\sigma_u^{j,*}(t)v(t)dt\notag\\
&+\mathbb{E}\int_0^TD_t^j\left[g_x(X^*(T))\Phi(T)\right]\Psi(t)\sigma_u^{j,*}(t)v(t)dt,
\end{align*}
and
\begin{align*}
\mathbb{E}\int_0^T&G_{3,j}(T,t)v(t)\circ dB_j^H(t)\notag\\
=&\mathbb{E}\int_0^T\mathbb{E}^{\mathcal{F}_t}\left[\int_t^Tf_x^*(s)\Phi(s)ds\right]\Psi(t)\gamma_u^{j,*}(t)v(t)\circ dB_j^H(t)\notag\\
&+\mathbb{E}\left[\mathbb{E}^{\mathcal{F}_t}\left[g_x(X^*(T))\Phi(T)\right]\int_0^T\Psi(t)\gamma_u^{j,*}(t)v(t)\circ dB_j^H(t)\right]\notag\\
=&\mathbb{E}\int_0^T\mathbb{D}_t^{H,j}\left[\mathbb{E}^{\mathcal{F}_t}\left[\int_t^Tf_x^*(s)\Phi(s)ds\right]\Psi(t)\gamma_u^{j,*}(t)v(t)\right] dt\notag\\
&+\mathbb{E}\int_0^T\mathbb{D}_t^{H,j}\left[\mathbb{E}^{\mathcal{F}_t}\left[g_x(X^*(T))\Phi(T)\right]\Psi(t)\gamma_u^{j,*}(t)v(t)\right] dt.\\
\end{align*}

Above all, we  rewrite (\ref{3.12}) as
\begin{align*}
\frac{d}{d\varepsilon}J\Big(u^*(\cdot)&+\varepsilon v(\cdot)\Big)\Big|_{\varepsilon=0}\\
=\mathbb{E}\int_0^T\Bigg[&\int_t^Tf_x^*(s)\Phi(s)ds\Psi(t)\left((b_u^*(t)-\sum_{j=1}^m\sigma_x^{j,*}(t)\sigma_u^{j,*}(t)\right)
+f_u^*(t)\notag\\
&+g_x(X^*(T))\Phi(T)\Psi(t)\left(b_u^*(t)-\sum_{j=1}^m\sigma_x^{j,*}(t)\sigma_u^{j,*}(t)\right)\notag\\
&+\sum_{j=1}^m\int_t^TD_t^j\left[f_x^*(s)\Phi(s)\right]ds\Psi(t)\sigma_u^{j,*}(t)\notag\\
&+\sum_{j=1}^mD_t^j\left[g_x(X^*(T))\Phi(T)\right]\Psi(t)\sigma_u^{j,*}(t)\Bigg]v(t)dt\notag\\
+\mathbb{E}\int_0^T&\sum_{j=1}^m\mathbb{D}_t^{H,j}\Bigg[\mathbb{E}^{\mathcal{F}_t}\left[\int_t^Tf_x^*(s)\Phi(s)ds\right]\Psi(t)\gamma_u^{j,*}(t)v(t)
\notag\\
&\qquad\quad\qquad+\mathbb{E}^{\mathcal{F}_t}\left[g_x(X^*(T))\Phi(T)\right]\Psi(t)\gamma_u^{j,*}(t)v(t)\Bigg] dt.
\end{align*}
Since $u^*(\cdot)$ is the optimal control, $\frac{d}{d\varepsilon}J\Big(u^*(\cdot)+\varepsilon v(\cdot)\Big)\Big|_{\varepsilon=0}\ge 0$. This complete the proof of Theorem \ref{mainin}.
\end{proof}

For simplicity, we  rewrite the $G\hat{a}teaux$ derivative of $J$ as
\begin{align}\label{G-J}
&\frac{d}{d\varepsilon}J\Big(u^*(\cdot)+\varepsilon v(\cdot)\Big)\Big|_{\varepsilon=0}\notag\\
&:=\mathbb{E}\int_0^T\Big[b_u^*(t)P_tv(t)+\sum_{j=1}^m\sigma_u^{j,*}(t)Q_j(t)v(t)+f_u^*(t)+\sum_{j=1}^m\mathbb{D}_t^{H,j}\left[\mathbb{E}^{\mathcal{F}_t}\left[\gamma_u^*(t)P(t)v(t)\right]\right]\Big]dt\notag\\
&=\mathbb{E}\int_0^T\Big[b_u^*(t)p_tv(t)+\sum_{j=1}^m\sigma_u^{j,*}(t)q_j(t)v(t)+f_u^*(t)+\sum_{j=1}^m\mathbb{D}_t^{H,j}\left[\gamma_u^*(t)p(t)v(t)\right]\Big]dt.
\end{align}
Here
\begin{align}\label{3.19}
P(t)=&\Psi(t)\left[\int_t^Tf_x^*(s)\Phi(s)ds+g_x(X^*(T))\Phi(T)\right],\notag\\
p(t)=&\Psi(t)\mathbb{E}^{\mathcal{F}_t}\left[\int_t^Tf_x^*(s)\Phi(s)ds+g_x(X^*(T))\Phi(T)\right],
\end{align}
and
\begin{align}\label{3.21}
Q_j(t)=&-\sigma_x^{j,*}(t)P(t)+\Psi(t)\left[\int_t^TD_t^j\big[f_x^*(s)\Phi(s)\big]ds+D_t^j\big[g_x(X^*(T))\Phi(T)\big]\right],\notag\\
q_j(t)=&-\sigma_x^{j,*}(t)p(t)+\Psi(t)\mathbb{E}^{\mathcal{F}_t}\left[\int_t^TD_t^j\big[f_x^*(s)\Phi(s)\big]ds+D_t^j\big[g_x(X^*(T))\Phi(T)\big]\right].
\end{align}
Since $\mathbb{D}^H_t[v(t)]=0$, by the arbitrary of $v(\cdot)$, we obtain the necessary condition for optimal control:
\begin{align*}
b_u^*(t)p_t+\sum_{j=1}^m\sigma_u^{j,*}(t)q_j(t)+f_u^*(t)+\sum_{j=1}^m\mathbb{D}_t^{H,j}\left[\gamma_u^*(t)p(t)\right]=0.
\end{align*}

\begin{remark}
Thanks for Eq. (\ref{G3}), we obtain the necessary condition only contains $p(\cdot)$, but not $P(\cdot)$. Indeed, we simplify the condition obtained in \cite{han2013maximum}, where the necessary condition contains the term $\mathbb{E}^{\mathcal{F}_t}\left[\mathbb{D}^{H,j}_t[\gamma_u^*(t)P(t)]\right]$.
\end{remark}

\begin{remark}
Up to now, we obtain the condition of optimal control when $v\in\bar{\mathbb{V}}$. Indeed, for any $\tilde{v}=\tilde{u}-u^*\in\mathbb{V}$ and $\delta>0$, there exists at least one $\bar{v}\in\bar{\mathbb{V}}$ such that $\mathbb{E}\|\tilde{v}(t)-\bar{v}(t)\|^2\le \delta$ for all $t\in[0,T]$. Then
\begin{align*}
&J(u^*(\cdot)+\tilde{v}(\cdot))-J(u^*(\cdot))\\&=J(u^*(\cdot)+\tilde{v}(\cdot))-J(u^*(\cdot)+\bar{v}(\cdot))+J(u^*(\cdot)+\bar{v}(\cdot))-J(u^*(\cdot))\\
&\ge O(\delta).
\end{align*}
By the arbitrary of $\tilde{v}$ and $\delta$, we show the $u^*(\cdot)$ is the optimal control in $\mathbb{U}$.
\end{remark}
~\\

Then we give out the relationship between $p$ and $q$.
~\\

\begin{theorem}\label{the2}
Let $p(\cdot)$, $q(\cdot)$ be defined as (\ref{3.19}) and (\ref{3.21}). Then $\left(p(\cdot), q(\cdot)\right)$ is the solution to the following BSDE,
\begin{align*}
\left\{\begin{array}{l}
-dp(t)=\left[b_x^*(t)p(t)+\sum_{j=1}^m\sigma_x^{j,*}(t)q_j(t)+f_x^*(t)\right]dt+\sum_{j=1}^m\gamma_x^{j,*}(t)p(t)\circ dB_j^H(t)\\
\qquad\qquad-\sum_{j=1}^m q_j(t)dB_j(t),\\
p(T)=g_x(X^*(T)).
\end{array}\right.
\end{align*}
\end{theorem}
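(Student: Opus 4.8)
The plan is to exhibit $p$ as the product of the inverse fundamental solution $\Psi$ with an $\mathcal F_t$-martingale (modulo a finite-variation correction), and then to read off the coefficients of the BSDE by the It\^o product formula for mixed integrals, in the spirit of Proposition 2.7 of \cite{han2013maximum}. Throughout I would use that $\{\mathcal F_t\}$ is the filtration generated by the underlying Brownian motion $B$, and that $\Phi(t)\Psi(t)=1$.

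First, set $\xi:=\int_0^Tf_x^*(s)\Phi(s)\,ds+g_x(X^*(T))\Phi(T)$ and $M(t):=\mathbb{E}^{\mathcal F_t}[\xi]$. By the martingale representation theorem, followed by a computation of Malliavin derivatives (the Clark--Ocone formula), $M(t)=\mathbb{E}[\xi]+\sum_{j=1}^m\int_0^t Z_j(s)\,dB_j(s)$ with $Z_j(s)=\mathbb{E}^{\mathcal F_s}[D_s^j\xi]$. Put $N(t):=\mathbb{E}^{\mathcal F_t}\big[\int_t^Tf_x^*(s)\Phi(s)\,ds+g_x(X^*(T))\Phi(T)\big]$; since $\int_0^tf_x^*(s)\Phi(s)\,ds$ is $\mathcal F_t$-measurable we have $N(t)=M(t)-\int_0^tf_x^*(s)\Phi(s)\,ds$, hence $p(t)=\Psi(t)N(t)$ and $dN(t)=-f_x^*(t)\Phi(t)\,dt+\sum_{j=1}^m Z_j(t)\,dB_j(t)$.

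The next step is to identify $\Psi(t)Z_j(t)=q_j(t)+\sigma_x^{j,*}(t)p(t)$: since $f_x^*(s)\Phi(s)$ is $\mathcal F_s$-measurable, $D_t^j[f_x^*(s)\Phi(s)]=0$ for $t>s$, so $D_t^j\xi=\int_t^TD_t^j[f_x^*(s)\Phi(s)]\,ds+D_t^j[g_x(X^*(T))\Phi(T)]$, and multiplying $\mathbb{E}^{\mathcal F_t}[D_t^j\xi]$ by $\Psi(t)$ gives exactly $q_j(t)+\sigma_x^{j,*}(t)p(t)$ by the definition (\ref{3.21}). I would then apply the It\^o product rule to $p(t)=\Psi(t)N(t)$, using the SDE (\ref{3.7}) for $\Psi$. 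Because $B^H$ has zero quadratic variation for $H>1/2$, the Stratonovich fractional term in $d\Psi$ contributes nothing to the cross-variation, and the only correction is $d\langle\Psi,N\rangle(t)=-\sum_{j=1}^m\sigma_x^{j,*}(t)\Psi(t)Z_j(t)\,dt$. Collecting the $dt$, $dB_j$ and $\circ dB_j^H$ coefficients, using $\Phi\Psi=1$, $\Psi N=p$ and the substitution just established, the terms quadratic in $\sigma_x^{j,*}$ cancel and the dynamics collapse to the asserted BSDE; the terminal value follows from $p(T)=\Psi(T)g_x(X^*(T))\Phi(T)=g_x(X^*(T))$.

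The main obstacle will be the rigorous justification of the It\^o product formula in this mixed It\^o--Stratonovich framework and of the Clark--Ocone representation of $\xi$: one needs sufficient integrability and $\phi$-regularity of $\Phi$, $\Psi$ and their Malliavin derivatives (which should follow from (H1)--(H2) and the a priori estimates underlying Lemma \ref{lem1}), and one must check carefully that converting the Stratonovich $dB^H$ integrals introduces no spurious drift.
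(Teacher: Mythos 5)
Your proof is correct and follows the same overall skeleton as the paper's --- martingale representation for $\mathbb{E}^{\mathcal F_t}[\xi]$, the It\^o product rule applied to $\Psi(t)$ times the resulting semimartingale, and a Malliavin-calculus identification of the Brownian integrand --- but the identification step is carried out by a genuinely different device. The paper leaves the integrand $\tilde q_j$ abstract, derives the SDE for $p$ with $\tilde q_j$ still unknown, and then pins it down by computing $D_t^jp(t)$ in two ways: once from that SDE, which requires the auxiliary Lemma \ref{le3} asserting that the diagonal Malliavin derivative of $\int_0^t y_j\circ dB_j^H$ vanishes, and once from the definition (\ref{3.21}) via Lemma 3.3 of \cite{han2013maximum} and Proposition 1.2.8 of \cite{nualart2006malliavin}; comparison gives $\tilde q_j=-\sigma_x^{j,*}p-q_j$. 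You instead invoke Clark--Ocone to write the integrand as $Z_j(s)=\mathbb{E}^{\mathcal F_s}[D_s^j\xi]$ up front, after which $\Psi(t) Z_j(t)=q_j(t)+\sigma_x^{j,*}(t)p(t)$ drops straight out of (\ref{3.21}), with no detour through $D_t^jp(t)$ and no need for Lemma \ref{le3}. The two routes rest on the same underlying fact --- the commutation of $D^j$ with conditional expectation (Proposition 1.2.8 of \cite{nualart2006malliavin}), which is also what proves Clark--Ocone --- so what your version buys is the elimination of the fractional-integral lemma and a shorter argument, at the price of having to verify $\xi\in\mathbb{D}^{1,2}$ explicitly (the same integrability the paper assumes implicitly). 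Your It\^o bookkeeping --- the covariation $d\langle\Psi,N\rangle(t)=-\sum_{j}\sigma_x^{j,*}(t)\Psi(t)Z_j(t)\,dt$ with no contribution from the $\circ\, dB_j^H$ term since $H>1/2$, and the cancellation of the $(\sigma_x^{j,*})^2p$ terms --- matches the paper's computation in (\ref{3.25}) exactly, and the terminal condition is immediate from (\ref{3.19}).
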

\begin{proof}
It is obviously that
\begin{align*}
\mathbb{E}^{\mathcal{F}_t}\left[\int_0^Tf_x^*(s)\Phi(s)ds+g_x(X^*(T))\Phi(T)\right]
\end{align*}
is a martingale with respect to $\mathcal{F}_t$.
Due to martingale representation theorem, there are $\mathcal{F}_t$-adapted processes $\tilde{q}_j(\cdot)$, such that
\begin{align*}
p(t)=&\Psi(t)\mathbb{E}^{\mathcal{F}_t}\left[\int_0^Tf_x^*(s)\Phi(s)ds+g_x(X^*(T))\Phi(T)\right]-\Psi(t)\int_0^tf_x^*(s)\Phi(s)ds\\
=&-\sum_{j=1}^m\Psi(t)\int_0^t\Phi(s)\tilde{q}_j(s)dB_j(s)+\Psi(t)p(0)-\Psi(t)\int_0^tf_x^*(s)\Phi(s)ds\\
\triangleq&-\Psi(t)K(t),
\end{align*}
where
\begin{align}\label{3.24}
dK(t)=\Phi(t)f_x^*(t)dt+\sum_{j=1}^m\Phi(t)\tilde{q}_j(t)dB_j(t).
\end{align}
It should be noticed that, different from control problem only driven by fractional Brownian motion, $\tilde{q}_j(\cdot)$ and $q_j(\cdot)$ are different processes. We will find the relationship between them in the following proof. By (\ref{3.7}), (\ref{3.24}) and $\Psi(t)K(t)=-p(t)$, we get the following equation,
\begin{align}\label{3.25}
dp(t)=&-\Psi(t)dK(t)-K(t)d\Psi(t)-d\Psi(t)dK(t)\notag\\
=&-\left[b_x^*(t)p(t)-\sum_{j=1}^m\sigma_x^{j,*}(t)^2p(t)-\sum_{j=1}^m\sigma_x^{j,*}(t)\tilde{q}_j(t)+f_x^*(t)\right]dt\notag\\
&-\sum_{j=1}^m\gamma_x^{j,*}(t)p(t)\circ dB_j^H(t)-\sum_{j=1}^m\left(\sigma_x^{j,*}(t)p(t)+\tilde{q}_j(t)\right)dB_j(t),
\end{align}
which means
\begin{align*}
D_t^jp(t)=-\sigma_x^{j,*}(t)p(t)-\tilde{q}_j(t).
\end{align*}
On the other hand, by Lemma 3.3 of \cite{han2013maximum}, we see that
\begin{align*}
D_t^jp(t)=q_j(t).
\end{align*}
So we have 
\begin{align*}
\tilde{q}_j(t)=-\sigma_x^{j,*}(t)p(t)-q_j(t).
\end{align*}
Then Eq. (\ref{3.25}) can be rewritten as 
\begin{align*}
dp(t)=&-\left[b_x^*(t)p(t)+\sum_{j=1}^m\sigma_x^{j,*}(t)q_j(t)+f_x^*(t)\right]dt-\sum_{j=1}^m\gamma_x^{j,*}(t)p(t)\circ dB_j^H(t)\\
&+\sum_{j=1}^m q_j(t)dB_j(t).
\end{align*}
The terminal condition $p(T)=g_x(X^*(T))$ is directly obtained from (\ref{3.19}). This completes the proof of Theorem \ref{the2}.
\end{proof}

Then we give out the system of condition that the optimal control should satisfy.

\begin{theorem}
Let the assumptions (H1) and (H2) hold. Assume that $\left(u^*(\cdot),X^*(\cdot)\right)$ is an optimal pair. Then $\left(u^*(\cdot),X^*(\cdot)\right)$ satisfies the following system of equations.
\begin{align}
\left\{
\begin{array}{l}
 d X^*=b\left(t,X^*(t),u^*(t)\right)dt+\sigma\left(t,X^*(t),u^*(t)\right)dB(t)+\gamma\left(t,X^*(t),u^*(t)\right)\circ dB^H(t),\\\\
 X^*(0)=x_0,\\\\
-dp(t)=\left[b_x^*(t)p(t)+\sum_{j=1}^m\sigma_x^{j,*}(t)q_j(t)+f_x^*(t)\right]dt+\sum_{j=1}^m\gamma_x^{j,*}(t)p(t)\circ dB_j^H(t)\\\\
\qquad\qquad-\sum_{j=1}^m q_j(t)dB_j(t),\\\\
p(T)=g_x(X^*(T)),\\\\
b_u^*(t)p_t+\sum_{j=1}^m\sigma_u^{j,*}(t)q_j(t)+f_u^*(t)+\sum_{j=1}^m\mathbb{D}^{H,j}_t[\gamma^*_u(t)p(t)]=0.
\end{array}\right\}
\end{align}
\end{theorem}

\begin{remark}
Let $\sigma=0$, we obtain a new maximum principle for control systems driven by fractional Brownian motion. The main progress is that the necessary condition contains only one equality.
\end{remark}

~\\

\begin{remark}
Let $W(t)$ be a $1$-dimensional standard Brownain motion independent with the $1$-dimensional fractional Brownian motion $B^H(t)$. Consider the control system with the following state process,
\begin{align*}
    \left\{\begin{array}{l}
 d X(t)=b\left(t,X(t),u(t)\right)dt+\sigma\left(t,X(t),u(t)\right)dW(t)+\gamma\left(t,X(t),u(t)\right)\circ dB^H(t),\\
 X(0)=x_0.
\end{array}\right.
\end{align*}
Let $B(t)$ be the underlying Brownian motion of $B^H(t)$ and $W^H(t)=\int_0^t Z_H(t,\,s) dW(s)$. Let $\tilde{B}^H(t)=\left(B^H(t), W^H(t)\right)^T$ and  $\tilde{B}(t)=\left(B(t), W(t)\right)^T$. The the state process can be rewritten as
\begin{align*}
    \left\{\begin{array}{l}
 d X(t)=b\left(t,X(t),u(t)\right)dt+\tilde{\sigma}\left(t,X(t),u(t)\right)^Td\tilde{B}(t)+\tilde{\gamma}\left(t,X(t),u(t)\right)^T\circ d\tilde{B}^H(t),\\
 X(0)=x_0,
\end{array}\right.
\end{align*}
where
\begin{align*}
\tilde{\sigma}\left(t,X(t),u(t)\right)=\left(0,\sigma\left(t,X(t),u(t)\right)\right)^T,\quad \tilde{\gamma}\left(t,X(t),u(t)\right)=\left(\gamma\left(t,X(t),u(t)\right),0\right)^T.
\end{align*}
Then it could be dealt with through the main results we obtain.
\end{remark}

\section{Linear quadratic case}
In this section, we consider a linear quadratic (LQ) case, let the state process be defined as
\begin{align}\label{4.1}
    \left\{\begin{array}{l}
 d X(t)=\left(A(t)X(t)+\tilde{A}(t)u(t)\right)dt+\left(M(t)X(t)+\tilde{M}(t)u(t)\right)dB(t)\\
\qquad\qquad+\left(N(t)X(t)+\tilde{N}(t)u(t)\right)\circ dB^H(t),\quad t\in[0,T],\\
 X(0)=x_0,
\end{array}\right.
\end{align}
with the cost function
\begin{align}\label{4.2}
    J(u(\cdot))=\frac{1}{2}\mathbb{E}\left[\int_0^T \left(Q(t)X(t)^2+R(t)u(t)^2\right)dt+GX(T)^2\right].
\end{align}
Here $R(\cdot)$ is a positive determined function in $[0,T]$ and $G>0$.

By using the conclusion of section 3, we  get the adjoint equation 
\begin{align}\label{4.3}
\left\{\begin{array}{l}
-dp(t)=\left[A(t)p(t)+M(t)q(t)+Q(t)X^*(t)\right]dt+N(t)p(t)\circ dB^H(t)\\
\qquad\qquad-q(t)dB(t),\\
p(T)=GX^*(T),
\end{array}\right.
\end{align}
and the optimal control process $u^*(\cdot)$ should satisfy
\begin{align*}
\tilde{A}(t)p(t)+\tilde{M}(t)q(t)+R(t)u^*(t)+\tilde{N}(t)\mathbb{D}^{H}_t[p(t)]=0,
\end{align*}
i.e.,
\begin{align}\label{4.4}
u^*(t)=-R(t)^{-1}\left(\tilde{A}(t)p(t)+\tilde{M}(t)q(t)+\tilde{N}(t)\mathbb{D}^{H}_t[p(t)]\right).
\end{align}
\begin{theorem}
The function $u^*(t)=-R(t)^{-1}\left(\tilde{A}(t)p(t)+\tilde{M}(t)q(t)+\tilde{N}(t)\mathbb{D}^{H}_t[p(t)]\right),\quad t\in[0,T]$ is the unique optimal control for  LQ problem (\ref{4.1}), (\ref{4.2}), where $(p(\cdot),q(\cdot))$ is defined by Eq.  (\ref{4.3}).
\end{theorem}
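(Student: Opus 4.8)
The plan is to prove the theorem by a convexity/verification argument: the feedback control obtained from the maximum principle of Section~3 is stationary, and since the LQ cost is strictly convex in the control, stationarity is equivalent to global optimality and forces uniqueness. First I would record the structure of the LQ problem. The state equation (\ref{4.1}) is affine in the pair $(X,u)$ and, by the well-posedness already invoked for (\ref{3.6})--(\ref{3.7}), has a unique solution for each admissible $u$, so the solution map $u(\cdot)\mapsto X^{u}(\cdot)$ is affine. Hence $u\mapsto J(u)$ is a quadratic functional on the Hilbert space $\mathcal H:=L^2_{\mathcal F}(0,T;\mathbb R)$ (for the LQ model one takes $U=\mathbb R$, so $\mathbb U=\mathcal H$), with homogeneous quadratic part $\tfrac12\,\mathbb E\big[\int_0^T(Q(t)|X^{0,u}(t)|^2+R(t)u(t)^2)\,dt+G|X^{0,u}(T)|^2\big]$, where $X^{0,u}$ is the state with $x_0=0$ and the $X$-terms of the coefficients removed. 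Since $Q,R>0$ (so $R\ge\delta>0$ on $[0,T]$) and $G>0$, this quadratic part dominates $\tfrac{\delta}{2}\|u\|_{\mathcal H}^2$; thus $J$ is coercive and strictly convex, and the direct method of the calculus of variations yields a unique minimiser $u^*\in\mathbb U$.

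Next I would identify the form of $u^*$. Applying the maximum principle of Section~3 to the optimal pair $(u^*,X^*)$, I use that in the LQ model $\gamma(t,x,u)=N(t)x$ is independent of $u$, hence $\gamma_u^{*}\equiv0$; this makes every Malliavin/fractional correction term in (\ref{3.22}) and in the final system of Section~3 vanish. With $\gamma_u^{*}\equiv0$, the definitions (\ref{3.19})--(\ref{3.21}) and Theorem~\ref{the2} reduce the adjoint BSDE to (\ref{4.3}), while (\ref{3.22}) becomes $\tilde A(t)p(t)+\tilde M(t)q(t)+R(t)u^*(t)=0$, i.e.\ (\ref{4.4}). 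In particular the coupled forward--backward system consisting of (\ref{4.1}) with control (\ref{4.4}) and of the linear BSDE (\ref{4.3}) is solvable, with solution $(X^*,p,q)$; once $u^*$ is fixed this system is moreover uniquely solvable by well-posedness of (\ref{4.1}) and of the linear BSDE (\ref{4.3}), and $u^*$ given by (\ref{4.4}) belongs to $\mathcal H$ because $(p,q)$ is square integrable and the coefficients are bounded.

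Finally I would close the logical loop by proving sufficiency: any admissible control $\hat u$ of the feedback form (\ref{4.4}) arising from a solution $(\hat X,\hat p,\hat q)$ of the FBSDE is globally optimal, which together with strict convexity identifies it with $u^*$. Fix an arbitrary $u\in\mathbb U$, set $v=u-\hat u$, and let $y$ solve the variation equation (\ref{3.3}) with coefficients frozen at $(\hat X,\hat u)$; by affinity the state under $\hat u+\varepsilon v$ is $\hat X+\varepsilon y$, so $\varphi(\varepsilon):=J(\hat u+\varepsilon v)$ is a convex quadratic polynomial. Repeating the Gateaux-derivative computation in the proof of the first theorem of Section~3 (which rests on Lemma~\ref{lem1} and dominated convergence) and substituting the LQ data, $\varphi'(0)=\mathbb E\int_0^T\big(f_x^*(t)y(t)+f_u^*(t)v(t)\big)dt+\mathbb E[g_x(X^*(T))y(T)]$, and the duality manipulation of (\ref{3.12})--(\ref{3.22}), specialised to $\gamma_u^{*}\equiv0$, rewrites this as $\varphi'(0)=\mathbb E\int_0^T\big(\tilde A(t)\hat p(t)+\tilde M(t)\hat q(t)+R(t)\hat u(t)\big)v(t)\,dt=0$. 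Convexity of $\varphi$ then gives $J(u)=\varphi(1)\ge\varphi(0)+\varphi'(0)=J(\hat u)$, so $\hat u$ is optimal; strict convexity forces $\hat u=u^*$ (if $u^{**}$ were another minimiser, $\tfrac12(u^*+u^{**})$ would strictly lower $J$), proving both optimality and uniqueness.

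The step I expect to be the main obstacle is the duality identity used for $\varphi'(0)$: applying It\^o's product rule to $\hat p(t)y(t)$ when both factors carry a Stratonovich fractional differential, so that the fractional cross-variation contributions are correctly accounted for via Proposition~2.7 of \cite{han2013maximum}, and securing the a priori integrability (the $W_0^{\alpha,\infty}$-type bounds behind Lemma~\ref{lem1}) that justifies taking expectations and differentiating $\varphi$ under the integral sign. A minor additional check is that the feedback map $u^*\mapsto$ (\ref{4.4}) really produces an element of $\mathcal H$, which follows from square integrability of $(p,q)$ and boundedness of $\tilde A,\tilde M,R^{-1}$; beyond these points the argument is routine convex analysis.
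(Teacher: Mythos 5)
Your proposal is correct, and its skeleton coincides with the paper's: both are verification arguments that combine an adjoint duality identity (showing the first--order term of $J(\tilde u)-J(u^*)$ vanishes under (\ref{4.4})) with convexity of the quadratic cost to get optimality, and then exploit the strict convexity coming from $R(t)\ge\delta>0$ to get uniqueness. The differences are in how each ingredient is produced. For the duality identity, the paper works directly: it applies It\^o's product rule to $p(t)\bigl(\tilde X(t)-X^*(t)\bigr)$, observes that the fractional Stratonovich terms cancel pairwise between the forward and adjoint equations, and integrates; you instead re-run the Ga\^teaux-derivative computation of Section~3 (Lemma~\ref{lem1}, the manipulations (\ref{3.12})--(\ref{3.22})) at the pair $(\hat X,\hat u)$ and use that $\gamma_u^*\equiv 0$ in the LQ model kills every Malliavin/fractional correction, which is a valid but heavier route that leans on machinery already proved rather than a three-line product-rule computation. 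For uniqueness, your abstract strict-convexity/parallelogram argument on the Hilbert space $L^2_{\mathcal F}(0,T;\mathbb R)$ is literally what the paper writes out explicitly via $a^2+b^2=2\bigl[(\tfrac{a+b}{2})^2+(\tfrac{a-b}{2})^2\bigr]$ and $R(t)\ge\delta$. What your version buys that the paper does not attempt is an existence statement: coercivity plus strict convexity of $J$ gives a minimiser by the direct method, whereas the paper simply verifies that the control defined through the (assumed solvable) coupled system (\ref{4.1}), (\ref{4.3}), (\ref{4.4}) is optimal; conversely, the paper's direct It\^o computation avoids having to justify that the Section~3 variational formulas remain valid at a non-optimal reference pair (true, but worth noting, since Lemma~\ref{lem1} and Theorem~\ref{the2} are stated only for the optimal pair). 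Neither difference is a gap; your argument is sound.
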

\begin{proof}
We now prove $u^*(\cdot)$ is the optimal control. For any $\tilde{u}(\cdot)\in \mathbb{U}$, let $\tilde{X}(\cdot)$ and $ X^*(\cdot) $ be the corresponding state processes to $\tilde{u}(\cdot)$ and $u^*(\cdot)$, respectively. We have that 
\begin{align*}
&d\left[p(t)\left(\Tilde{X}(t)-X^*(t)\right)\right]\\=&p(t)d\left(\Tilde{X}(t)-X^*(t)\right)+\left(\Tilde{X}(t)-X^*(t)\right)dp(t)+dp(t)d\left(\Tilde{X}(t)-X^*(t)\right)\\
=&p(t)\left[A(t)\left(\Tilde{X}(t)-X^*(t)\right)+\tilde{A}(t)\left(\Tilde{u}(t)-u^*(t)\right)\right]dt\\
&+p(t)\left[N(t)\left(\Tilde{X}(t)-X^*(t)\right)+\tilde{N}(t)\left(\Tilde{u}(t)-u^*(t)\right)\right]\circ dB^H(t)\\
&-\left(\Tilde{X}(t)-X^*(t)\right)\left[A(t)p(t)+M(t)q(t)+Q(t)X^*(t)\right]dt\\
&-\left(\Tilde{X}(t)-X^*(t)\right)N(t)p(t)\circ dB^H(t)\\
&+q(t)\left[M(t)\left(\Tilde{X}(t)-X^*(t)\right)+\tilde{M}(t)\left(\Tilde{u}(t)-u^*(t)\right)\right]dt+F(t)dB(t)\\
=&-\left[R(t)u^*(t)(\tilde{u}(t)-u^*(t))+Q(t)X^*(t)\left(\Tilde{X}(t)-X^*(t)\right)\right]dt+F(t)dB(t)\\
&+p(t)\tilde{N}(t)\left(\Tilde{u}(t)-u^*(t)\right)\circ dB^H(t)-\mathbb{D}^H_t[p(t)]\tilde{N}(t)\left(\Tilde{u}(t)-u^*(t)\right)dt
\end{align*}
where $F(t)$ is a $\mathcal{F}_t$-adapted process and the last equality holds is because of Eq. (\ref{4.4}).
Taking the integral and expectation to the above equation, we get
\begin{align*}
&\mathbb{E}GX^*(T)\left(\Tilde{X}(T)-X^*(T)\right)\\
=&\mathbb{E}p(T)\left(\Tilde{X}(T)-X^*(T)\right)\\
=&\mathbb{E}\int_0^Td\left[p(t)\left(\Tilde{X}(t)-X^*(t)\right)\right]\\
=&-\mathbb{E}\int_0^T\left[R(t)u^*(t)(\tilde{u}(t)-u^*(t))+Q(t)\left(\Tilde{X}(t)-X^*(t)\right)\right]dt.
\end{align*}
Then using the fact $a^2-b^2\ge 2b(a-b)$ and by the above equation, we obtain
\begin{align*}
J\left(\tilde{u}(\cdot)\right)-J\left(u^*(\cdot)\right)=&\frac{1}{2}\mathbb{E}\int_0^T\left[Q(t)\left(\Tilde{X}(t)^2-X^*(t)^2\right)+R(t)\left(\Tilde{u}(t)^2-u^*(t)^2\right)\right]dt\\
&+\frac{1}{2}\mathbb{E}G\left(\Tilde{X}(T)^2-X^*(T)^2\right)\\
\ge&\mathbb{E}\int_0^T\left[Q(t)X^*(t)\left(\Tilde{X}(t)-X^*(t)\right)+R(t)u^*(t)\left(\Tilde{u}(t)-u^*(t)\right)\right]dt\\
&+\mathbb{E}GX^*(T)\left(\Tilde{X}(T)-X^*(T)\right)\\
=&0,
\end{align*}
which shows $u^*(t)$ is the optimal control.

Then we prove $u^*(t)$ is unique. Assume that both $u^{*,1}(t)$ and $u^{*,2}(t)$ are optimal controls, $X^1(t)$ and $X^2(t)$ are corresponding state processes, respectively. It is easy to check $\frac{X^1(t)+X^2(t)}{2}$ is the corresponding state process to $\frac{u^{*,1}(t)+u^{*,2}(t)}{2}$. We assume there exist constants $\delta>0, \alpha\ge 0$, such that $R(t)\ge \delta$ and 
\begin{align*}
    J(u^{*,1}(\cdot))=J(u^{*,2}(\cdot))=\alpha.
\end{align*}
Through the fact $a^2+b^2=2[(\frac{a+b}{2})^2+(\frac{a-b}{2})^2]$, we have that
\begin{align*}
2\alpha=&J(u^{*,1}(\cdot))+J(u^{*,2}(\cdot))\\
=&\frac{1}{2}\mathbb{E}\int_0^T \Big[Q(t)(X^1(t)X^1(t)+X^2(t)X^2(t))+R(t)(u^{*,1}(t)u^{*,1}(t)+u^{*,2}(t)u^{*,2}(t))\Big]dt\\
&+\frac{1}{2}\mathbb{E}G(X^1(T)X^1(T)+X^2(T)X^2(T))\\
\ge&\mathbb{E}\int_0^T \Big[Q(t)\Big(\frac{X^1(t)+X^2(t)}{2}\Big)^2+R(t)\Big(\frac{u^{*,1}(t)+u^{*,2}(t)}{2}\Big)^2\Big]dt\\
&+\mathbb{E}G\Big(\frac{X^1(T)+X^2(T)}{2}\Big)^2+\mathbb{E}\int_0^TR(t)\Big(\frac{u^{*,1}(t)-u^{*,2}(t)}{2}\Big)^2dt\\
=&2J\Big(\frac{u^{*,1}(t)+u^{*,2}(t)}{2}\Big)+\mathbb{E}\int_0^TR(t)\Big(\frac{u^{*,1}(t)-u^{*,2}(t)}{2}\Big)^2dt\\
\ge&2\alpha+\frac{\delta}{4}\mathbb{E}\int_0^T|u^{*,1}(t)-u^{*,2}(t)|^2dt.
\end{align*}
Thus, we have 
\begin{align*}
    \mathbb{E}\int_0^T|u^{*,1}(t)-u^{*,2}(t)|^2dt\le 0,
\end{align*}
which shows that the optimal control is unique.
\end{proof}

\section{Funding and conflict of interest}
This work was supported by National Key R$\&$D Program of China (Grant numbers 2023YFA1009200) and National Science Foundation of China (Grant numbers 12471417). The author Yuecai Han has received research support from the fundings. There is no conflict of interest.
\bibliography{main}


\begin{thebibliography}{20}
\ifx \bisbn   \undefined \def \bisbn  #1{ISBN #1}\fi
\ifx \binits  \undefined \def \binits#1{#1}\fi
\ifx \bauthor  \undefined \def \bauthor#1{#1}\fi
\ifx \batitle  \undefined \def \batitle#1{#1}\fi
\ifx \bjtitle  \undefined \def \bjtitle#1{#1}\fi
\ifx \bvolume  \undefined \def \bvolume#1{\textbf{#1}}\fi
\ifx \byear  \undefined \def \byear#1{#1}\fi
\ifx \bissue  \undefined \def \bissue#1{#1}\fi
\ifx \bfpage  \undefined \def \bfpage#1{#1}\fi
\ifx \blpage  \undefined \def \blpage #1{#1}\fi
\ifx \burl  \undefined \def \burl#1{\textsf{#1}}\fi
\ifx \doiurl  \undefined \def \doiurl#1{\url{https://doi.org/#1}}\fi
\ifx \betal  \undefined \def \betal{\textit{et al.}}\fi
\ifx \binstitute  \undefined \def \binstitute#1{#1}\fi
\ifx \binstitutionaled  \undefined \def \binstitutionaled#1{#1}\fi
\ifx \bctitle  \undefined \def \bctitle#1{#1}\fi
\ifx \beditor  \undefined \def \beditor#1{#1}\fi
\ifx \bpublisher  \undefined \def \bpublisher#1{#1}\fi
\ifx \bbtitle  \undefined \def \bbtitle#1{#1}\fi
\ifx \bedition  \undefined \def \bedition#1{#1}\fi
\ifx \bseriesno  \undefined \def \bseriesno#1{#1}\fi
\ifx \blocation  \undefined \def \blocation#1{#1}\fi
\ifx \bsertitle  \undefined \def \bsertitle#1{#1}\fi
\ifx \bsnm \undefined \def \bsnm#1{#1}\fi
\ifx \bsuffix \undefined \def \bsuffix#1{#1}\fi
\ifx \bparticle \undefined \def \bparticle#1{#1}\fi
\ifx \barticle \undefined \def \barticle#1{#1}\fi
\bibcommenthead
\ifx \bconfdate \undefined \def \bconfdate #1{#1}\fi
\ifx \botherref \undefined \def \botherref #1{#1}\fi
\ifx \url \undefined \def \url#1{\textsf{#1}}\fi
\ifx \bchapter \undefined \def \bchapter#1{#1}\fi
\ifx \bbook \undefined \def \bbook#1{#1}\fi
\ifx \bcomment \undefined \def \bcomment#1{#1}\fi
\ifx \oauthor \undefined \def \oauthor#1{#1}\fi
\ifx \citeauthoryear \undefined \def \citeauthoryear#1{#1}\fi
\ifx \endbibitem  \undefined \def \endbibitem {}\fi
\ifx \bconflocation  \undefined \def \bconflocation#1{#1}\fi
\ifx \arxivurl  \undefined \def \arxivurl#1{\textsf{#1}}\fi
\csname PreBibitemsHook\endcsname

\bibitem[\protect\citeauthoryear{Han et~al.}{2013}]{han2013maximum}
\begin{barticle}
\bauthor{\bsnm{Han}, \binits{Y.}},
\bauthor{\bsnm{Hu}, \binits{Y.}},
\bauthor{\bsnm{Song}, \binits{J.}}:
\batitle{Maximum principle for general controlled systems driven by fractional {B}rownian motions}.
\bjtitle{Applied Mathematics \& Optimization}
\bvolume{67}(\bissue{2}),
\bfpage{279}--\blpage{322}
(\byear{2013})
\end{barticle}
\endbibitem

\bibitem[\protect\citeauthoryear{Lyons and Qian}{2002}]{lyons2002system}
\begin{botherref}
\oauthor{\bsnm{Lyons}, \binits{T.}},
\oauthor{\bsnm{Qian}, \binits{Z.}}:
System control and rough paths.
Oxford University Press
(2002)
\end{botherref}
\endbibitem

\bibitem[\protect\citeauthoryear{Coutin and Qian}{2002}]{coutin2002stochastic}
\begin{barticle}
\bauthor{\bsnm{Coutin}, \binits{L.}},
\bauthor{\bsnm{Qian}, \binits{Z.}}:
\batitle{Stochastic analysis, rough path analysis and fractional {B}rownian motions}.
\bjtitle{Probability Theory and Related Fields}
\bvolume{122}(\bissue{1}),
\bfpage{108}--\blpage{140}
(\byear{2002})
\end{barticle}
\endbibitem

\bibitem[\protect\citeauthoryear{Nualart and Rascanu}{2002}]{rascanu2002differential}
\begin{botherref}
\oauthor{\bsnm{Nualart}, \binits{D.}},
\oauthor{\bsnm{Rascanu}, \binits{A.}}:
Differential equations driven by fractional {B}rownian motion.
Collectanea Mathematica,
55--81
(2002)
\end{botherref}
\endbibitem

\bibitem[\protect\citeauthoryear{Hu and Nualart}{2007}]{hu2007differential}
\begin{bchapter}
\bauthor{\bsnm{Hu}, \binits{Y.}},
\bauthor{\bsnm{Nualart}, \binits{D.}}:
\bctitle{Differential equations driven by h{\"o}lder continuous functions of order greater than 1/2}.
In: \bbtitle{The Abel Symposium on Stochastic Analysis},
pp. \bfpage{399}--\blpage{413}
(\byear{2007}).
\bcomment{Springer, Berlin}
\end{bchapter}
\endbibitem

\bibitem[\protect\citeauthoryear{Hu and Nualart}{2009}]{hu2009rough}
\begin{barticle}
\bauthor{\bsnm{Hu}, \binits{Y.}},
\bauthor{\bsnm{Nualart}, \binits{D.}}:
\batitle{Rough path analysis via fractional calculus}.
\bjtitle{Transactions of the American Mathematical Society}
\bvolume{361}(\bissue{5}),
\bfpage{2689}--\blpage{2718}
(\byear{2009})
\end{barticle}
\endbibitem

\bibitem[\protect\citeauthoryear{Friz and Victoir}{2010}]{friz2010multidimensional}
\begin{botherref}
\oauthor{\bsnm{Friz}, \binits{P.K.}},
\oauthor{\bsnm{Victoir}, \binits{N.B.}}:
Multidimensional stochastic processes as rough paths: theory and applications.
vol. 120.
Cambridge University Press
(2010)
\end{botherref}
\endbibitem

\bibitem[\protect\citeauthoryear{Hu}{2013}]{hu2013multiple}
\begin{barticle}
\bauthor{\bsnm{Hu}, \binits{Y.}}:
\batitle{{Multiple integrals and expansion of solutions of differential equations driven by rough paths and by fractional Brownian motions}}.
\bjtitle{Stochastics An International Journal of Probability and Stochastic Processes}
\bvolume{85}(\bissue{5}),
\bfpage{859}--\blpage{916}
(\byear{2013})
\end{barticle}
\endbibitem

\bibitem[\protect\citeauthoryear{Kushner}{1972}]{kushner1972necessary}
\begin{barticle}
\bauthor{\bsnm{Kushner}, \binits{H.}}:
\batitle{Necessary conditions for continuous parameter stochastic optimization problems}.
\bjtitle{SIAM Journal on Control}
\bvolume{10}(\bissue{3}),
\bfpage{550}--\blpage{565}
(\byear{1972})
\end{barticle}
\endbibitem

\bibitem[\protect\citeauthoryear{Bismut}{1978}]{bismut1978introductory}
\begin{barticle}
\bauthor{\bsnm{Bismut}, \binits{J.-M.}}:
\batitle{An introductory approach to duality in optimal stochastic control}.
\bjtitle{SIAM Review}
\bvolume{20}(\bissue{1}),
\bfpage{62}--\blpage{78}
(\byear{1978})
\end{barticle}
\endbibitem

\bibitem[\protect\citeauthoryear{Peng}{1990}]{peng1990general}
\begin{barticle}
\bauthor{\bsnm{Peng}, \binits{S.}}:
\batitle{A general stochastic maximum principle for optimal control problems}.
\bjtitle{SIAM Journal on Control and Optimization}
\bvolume{28}(\bissue{4}),
\bfpage{966}--\blpage{979}
(\byear{1990})
\end{barticle}
\endbibitem

\bibitem[\protect\citeauthoryear{Yong and Zhou}{1999}]{yong1999stochastic}
\begin{botherref}
\oauthor{\bsnm{Yong}, \binits{J.}},
\oauthor{\bsnm{Zhou}, \binits{X.}}:
Stochastic controls: Hamiltonian systems and {HJB} equations.
vol. 43.
{Springer Science $\&$ Business Media}
(1999)
\end{botherref}
\endbibitem

\bibitem[\protect\citeauthoryear{Hu and Zhou}{2005}]{hu2005stochastic}
\begin{barticle}
\bauthor{\bsnm{Hu}, \binits{Y.}},
\bauthor{\bsnm{Zhou}, \binits{X.}}:
\batitle{Stochastic control for linear systems driven by fractional noises}.
\bjtitle{SIAM Journal on Control and Optimization}
\bvolume{43}(\bissue{6}),
\bfpage{2245}--\blpage{2277}
(\byear{2005})
\end{barticle}
\endbibitem

\bibitem[\protect\citeauthoryear{Hu and Peng}{2009}]{hu2009backward}
\begin{barticle}
\bauthor{\bsnm{Hu}, \binits{Y.}},
\bauthor{\bsnm{Peng}, \binits{S.}}:
\batitle{Backward stochastic differential equation driven by fractional brownian motion}.
\bjtitle{SIAM Journal on Control and Optimization}
\bvolume{48}(\bissue{3}),
\bfpage{1675}--\blpage{1700}
(\byear{2009})
\end{barticle}
\endbibitem

\bibitem[\protect\citeauthoryear{Duncan and Pasik-Duncan}{2013}]{duncan2013linear}
\begin{barticle}
\bauthor{\bsnm{Duncan}, \binits{T.E.}},
\bauthor{\bsnm{Pasik-Duncan}, \binits{B.}}:
\batitle{{ Linear-quadratic fractional Gaussian control}}.
\bjtitle{SIAM Journal on Control and Optimization}
\bvolume{51}(\bissue{6}),
\bfpage{4504}--\blpage{4519}
(\byear{2013})
\end{barticle}
\endbibitem

\bibitem[\protect\citeauthoryear{Sun}{2021}]{sun2021stochastic}
\begin{barticle}
\bauthor{\bsnm{Sun}, \binits{Y.}}:
\batitle{{A stochastic maximum principle for general controlled systems driven by fractional Brownian motions}}.
\bjtitle{Journal of Mathematical Analysis and Applications}
\bvolume{497}(\bissue{1}),
\bfpage{124854}
(\byear{2021})
\end{barticle}
\endbibitem

\bibitem[\protect\citeauthoryear{Hu}{2005}]{hu2005integral}
\begin{botherref}
\oauthor{\bsnm{Hu}, \binits{Y.}}:
Integral transformations and anticipative calculus for fractional {B}rownian motions.
Memoirs of the American Mathematical Society-1950-2012 Electronic Backfile Collection
(2005)
\end{botherref}
\endbibitem

\bibitem[\protect\citeauthoryear{Han and Sun}{2018}]{yuecai2018solutions}
\begin{barticle}
\bauthor{\bsnm{Han}, \binits{Y.}},
\bauthor{\bsnm{Sun}, \binits{Y.}}:
\batitle{{Solutions to BSDEs driven by both fractional Brownian motions and the underlying standard Brownian motions}}.
\bjtitle{Acta Mathematica Scientia}
\bvolume{38}(\bissue{2}),
\bfpage{681}--\blpage{694}
(\byear{2018})
\end{barticle}
\endbibitem

\bibitem[\protect\citeauthoryear{da~Silva et~al.}{2018}]{da2018mixed}
\begin{barticle}
\bauthor{\bsnm{Silva}, \binits{J.L.}},
\bauthor{\bsnm{Erraoui}, \binits{M.}},
\bauthor{\bsnm{Essaky}, \binits{E.H.}}:
\batitle{Mixed stochastic differential equations: Existence and uniqueness result}.
\bjtitle{Journal of Theoretical Probability}
\bvolume{31},
\bfpage{1119}--\blpage{1141}
(\byear{2018})
\end{barticle}
\endbibitem

\bibitem[\protect\citeauthoryear{Nunno et~al.}{2008}]{nunno2008malliavin}
\begin{botherref}
\oauthor{\bsnm{Nunno}, \binits{G.D.}},
\oauthor{\bsnm{{\O}ksendal}, \binits{B.}},
\oauthor{\bsnm{Proske}, \binits{F.}}:
Malliavin calculus for l{\'e}vy processes with applications to finance.
Springer
(2008)
\end{botherref}
\endbibitem

\end{thebibliography}

\end{document}